\newcommand{\remove}[1]{}
\def\fskip#1{}
\newtheorem{theorem}{Theorem}
\newtheorem{corollary}{Corollary}
\newtheorem{definition}{Definition}
\newtheorem{lemma}{Lemma}
\newcommand{\Ac}{\{A(k)\}}
\newcommand{\Bc}{\{B(k)\}}
\newcommand{\bs}{\bar{S}}
\newcommand{\diag}{\mbox{diag}}
\newcommand{\Einf}{\mathcal{E}^\infty}
\newcommand{\Fc}{\{\mathcal{F}_k\}}
\newcommand{\Ginf}{G^{\infty}}
\newcommand{\pc}{\{\pi(k)\}}
\newcommand{\Pstar}{\mathcal{P}^*}
\newcommand{\Rm}{\mathbb{R}^m}
\newcommand{\Rmo}{\Z^+\times \mathbb{R}^{m}}
\newcommand{\Uc}{\{U(k)\}}
\newcommand{\Wc}{\{W(k)\}}
\newcommand{\xc}{\{x(k)\}}
\def\1{{\bf 1}}
\def\F{\mathcal{F}}
\newcommand{\al}{\alpha}
\def\R{\mathbb{R}}
\def\re{\mathbb{R}}
\def\S{\mathscr{S}}
\def\Z{\mathbb{Z}}
\newcommand{\EXP}[1]{\mathsf{E}\!\left[#1\right] }
\begin{document}
\title{Product of Random Stochastic Matrices}
\author{Behrouz Touri and Angelia Nedi\'c\thanks{Coordinated Science Laboratory, University of
Illinois, Urbana, IL 61801,
Email: \{touri1,angelia\} @illinois.edu. This
research was supported by the Air Force Office of Scientific Research
(AFOSR) Complex Networks Program under grant number FA9550-09-1-05205 and
the National Science Foundation under CAREER grant CMMI
07-42538}}
\date{}
\maketitle
\begin{abstract}
The paper deals with the convergence properties of the products of random (row-)stochastic matrices.
The limiting behavior of such products is studied from a dynamical system point of view.
In particular, by appropriately defining a dynamic associated with a given
sequence of random (row-)stochastic matrices, we prove that the dynamics
admits a class of time-varying Lyapunov functions, including a quadratic one.
Then, we discuss a special class of stochastic matrices, a class $\Pstar$,
which plays a central role in this work. We then introduce balanced chains and using some geometric properties of these chains, we characterize the stability of a subclass of balanced chains.
As a special consequence of this stability result,
we obtain an extension of a central result in the non-negative matrix theory
stating that, for any aperiodic and irreducible row-stochastic matrix $A$, the limit
$\lim_{k\to\infty}A^k$ exists and
it is a rank one stochastic matrix. We show that a generalization of this result holds not only for
sequences of stochastic matrices but also for
independent random sequences of such matrices.
\end{abstract}

\section{Introduction}
Averaging dynamics or distributed averaging dynamics has played a fundamental role in the recent
studies of various distributed systems and algorithms.
Examples of such distributed problems and algorithms include distributed optimization \cite{Tsitsiklis86,Nedic09,Nedic_cdc07,Johansson08a}, distributed control of robotic networks \cite{DistCtrlRobotNetw}, and study of opinion dynamics in social networks \cite{Krause:97,Krause:02}.

The study of averaging dynamics is closely related to the study of products of stochastic matrices.
Such products have been studied from two perspectives: the theory of Markov chains and the distributed averaging settings. The notable works in the domain of the theory of Markov chain are the early studies of Hajnal and Wolfowitz in \cite{hajnal} and \cite{wolf}, respectively, where sufficient conditions are derived for
the convergence of the products of row-stochastic matrices to a rank one matrix.
The exploration of this domain from the distributed averaging perspective was started by the work of \cite{SenetaCons} and the seminal work of J.~Tsitsiklis \cite{Tsitsiklis84}.
%Many of the later works in this domain, such as \cite{Jadbabaie03} are motivated
%and inspired by the work in \cite{Tsitsiklis84}.

Products of \textit{random} stochastic matrices have also attracted many mathematicians
as such products are examples of convolutions of probability measures on semigroups \cite{mukherjea1976measures,rosenblatt1964products,grenander1963probabilities,rosenblatt1960limits}. Due to the technicalities involved, such studies are confined to identically independently distributed (i.i.d.) random chains or their generalizations in the domain of stationary ergodic chains. From the engineering perspective, this area have been recently explored in
\cite{Tahbaz-Salehi08,Tahbaz-Salehi09,ErgodicityPaper,TouriNedicCDC2011,
TouriNedicCDC20112,TouriNedich:Approx,TouriNedich:Doubly}.
In \cite{Tahbaz-Salehi08}, a necessary and sufficient condition for ergodicity of products of i.i.d.\ stochastic matrices has been derived. A generalization of such result for stationary and ergodic chains is discussed \cite{Tahbaz-Salehi09}. In a sequence of papers, \cite{ErgodicityPaper,TouriNedich:Approx,TouriNedicCDC2011,TouriNedicCDC20112}, we have defined fundamental concepts of \textit{infinite flow property}, \textit{infinite flow graph}, and \textit{$\ell_1$-approximation}. We have showed that these properties are very closely related to
the convergence properties of the product of random stochastic matrices that are not necessarily identically distributed. The current work is a continuation of the line of the aforementioned papers.

In particular, in this paper, we derive a set of necessary and sufficient conditions for ergodicity and convergence of a product of independent random stochastic matrices.
%We show that many of the previously known results can be obtained from these conditions.
Specifically, we study a class of random stochastic matrices, which we refer to as \textit{balanced chains} and show that this class contains many of the previously studied chains of random and deterministic stochastic matrices. This property was first introduced in our earlier work \cite{TouriNedicCDC2011} for discrete-time dynamics and in \cite{BalancedHenderickx} for continuous-time dynamics. Much research has been done on such a criterion since then (see e.g.\ \cite{shi2011towards,martin2011continuous,bolouki2012theorems}).
Unlike the prior work, our work adopts dynamical system point of view for the averaging dynamics:
we first draw the connection between the products of random stochastic matrices and
the random dynamics driven by such matrices. Then, we show that every dynamics driven by a chain of independent random stochastic matrices admits
a \textbf{time-varying quadratic Lyapunov function}. In fact, we show more by establishing
that for any convex function, there exists a Lyapunov function adjusted to such a convex function.
This result opens up a new window for the study of averaging dynamics and distributed algorithms, as quadratic Lyapunov function has proven to be a powerful tool to study such dynamics for different sub-classes of stochastic chains (see e.g., \cite{alex_ange08} and \cite{ErgodicityPaper}). However, the non-existence of quadratic \textbf{time-invariant} Lyapunov functions was suspected for general class of averaging dynamics \cite{Jadbabaie03}, and it was proven later in \cite{Alex:Nonexistence}.

After proving the existence of time-varying quadratic Lyapunov functions for averaging dynamics,
we introduce a special class of stochastic chains, $\Pstar$ chains, and we show that the products of matrices
drawn from this sub-class converge almost surely. We then provide the definition of balanced-ness for random stochastic chains and we show that many previously studied classes of stochastic chains are
examples of such balanced chains. Finally, using a geometric property of the balanced chains,
we show that such chains are examples of $\Pstar$ chains, which leads to the main result of this paper which
can be interpreted as a generalization of the known convergence of $A^k$ to a rank-one stochastic matrix (for
aperiodic and irreducible stochastic matrix $A$) to the case of
inhomogeneous chains of stochastic matrices, as well as independent random chains of such matrices.

The contribution of this work is as follows: 1) we prove the existence of a family of time-varying Lyapunov functions for random averaging dynamics; 2) we introduce class $\Pstar$ of random stochastic chains, and provide necessary and sufficient conditions for the stability of the corresponding dynamics;
3) we introduce balanced chains and we use them to establish some necessary and sufficient conditions
for the stability of the resulting dynamics; and 4) we provide an extension of the fundamental
convergence result in the non-negative matrix theory.

The paper is organized as follows: in Section \ref{sec:setting}, we formulate and provide the random setting for our study of the products of random stochastic matrices which will be considered throughout the paper.
In Section \ref{sec:dynamics}, we study the dynamics related to such matrices and draw the connection between the study of such products and the associated dynamics, and we prove that the dynamics
admits a class of time-varying Lyapunov functions, including a quadratic one.
Then, we discuss the class $\Pstar$ in Section~\ref{sec:infflowstable:Pstar} which plays a central role in our development. We then introduce the class of balanced chains and using the geometric structure of
these chains, as well as the developed results in the preceding sections,
we characterize the stability of a subclass of those chains.
Finally, in Section~\ref{sec:connection}, we apply the developed results to prove an extension of a central result in the non-negative matrix theory on the convergence of $A^k$ to a rank-one matrix.
We conclude this work by a discussion in Section~\ref{sec:conclusion}.

\section{Problem Setting}\label{sec:setting}
We work exclusively with row-stochastic matrices, so we simply refer to them as stochastic matrices.
Let $(\Omega,\F,\Pr{})$ be a probability space and
let $\Wc$ be a chain of $m\times m$ random stochastic matrices,
i.e.\ for all $k\geq 1$, the matrix  $W(k)$ is a stochastic almost surely and
$W_{ij}(k):\Omega\to\R$ is a Borel-measurable function for all $i,j\in [m]$,
where $[m]=\{1,\ldots,m\}$.
Throughout this paper, we denote random sequences of stochastic matrices by last alphabet letters such as
$\Wc$ and $\Uc$, and we use the first alphabet letters such as $\Ac$ and $\Bc$
to denote deterministic sequences of stochastic matrices. We also refer to a sequence of stochastic matrices
as a {\it stochastic chain}, or just simply as a chain.
%refer to a random (deterministic)
%stochastic chain as a random (deterministic) chain.

 Let $\Wc$ be an independent random chain. Then, we say that
$\Wc$ is \textit{strongly aperiodic} if there exists a $\gamma\in(0,1]$
such that
 \[\EXP{W_{ii}(k)W_{ij}(k)}\geq \gamma \EXP{W_{ij}(k)}\qquad
\hbox{for all $i\not=j\in[m]$ and all $k\geq 1$}. \]
Note that if $W_{ii}(k)\geq \gamma$ almost surely for all $i\in[m]$ and all
$k\geq 1$, then such a chain is strongly aperiodic. Also, note that
by summing both sides of the above inequality over $j\not=i$, we obtain
 \[\EXP{W_{ii}(k)}\geq\EXP{W_{ii}(k)(1-W_{ii}(k))}\geq
\gamma (1-\EXP{W_{ii}(k)}).\]
Hence, $\EXP{W_{ii}(k)}\geq \frac{\gamma}{1-\gamma}$ for all $i\in[m]$ and
all $k\geq 1$. Thus, for a strongly aperiodic chain $\Wc$, the expected chain
$\{\EXP{W(k)}\}$ is strongly aperiodic. It follows that a
deterministic chain $\Ac$ is strongly aperiodic if and only if
$A_{ii}(k)\geq \tilde\gamma$ for some $\tilde \gamma>0$, and
for all $i\in[m]$ and $k\geq 1$.

For the subsequent use, for an $m\times m$ random (or deterministic) matrix
$W$ and a non-trivial index set
$S\subset [m]$ (i.e. $S\not=\emptyset$ and $S\not=[m]$),
we define  the quantity $W_{S\bar{S}}=\sum_{i\in S,j\in\bar{S}}W_{ij}$,
where $\bar S$ is the complement of the index set $S$.

We say that an independent random chain $\Wc$ is \textit{balanced}
if there exists some $\al>0$ such that
 \begin{align}\label{eqn:balanceddef}
    \EXP{W_{S\bar{S}}(k)}\geq \al\EXP{W_{\bar{S}S}(k)}\qquad\mbox{
for all nontrivial $S\subset [m]$ and all $k\geq 1$}.
  \end{align}
From this definition, it can be seen that $\al\leq 1$.

Finally, with a given random chain $\Wc$, let us associate a random graph
$\Ginf=([m],\Einf)$ with the vertex set $[m]$ and the edge set
$\Einf$ given by
\[\Einf(\omega)=\left\{\{i,j\}\mid \sum_{k=1}^{\infty}\left(W_{ij}(k,\omega)
+W_{ji}(k,\omega)\right)=\infty\right\}.\]
We refer to $\Ginf$ as \textit{the infinite flow graph} of $\Wc$.
By the Kolmogorov's 0-1 law, the infinite flow graph of an independent random
chain $\Wc$ is almost surely equal to a deterministic graph.
It has been shown that this determinstic graph
is equal to the infinite flow graph of the expected chain $\{\EXP{W(k)}\}$
(\cite{TouriNedich:Approx}, Theorem~5).
%Therefore, for an independent random chain $\Wc$ we can simply refer to
%$\Ginf$ as \textit{the infinite flow graph}.

For a matrix $W$, let $W_i$ and $W^j$ denote the $i$th row vector
and the $j$th column vector of $W$, respectively. Also, for a chain $\Wc$, we let
\[W(k:t_0)=W(k)\cdots W(t_0+1)\qquad\hbox{for $k>t_0\geq 0$},\]
with $W(k:k)=I$ for all $k\geq 0$. With these preliminary definitions and notation in place,
we can state the main result of the current study.
 \begin{theorem}\label{thrm:mainresult}
    Let $\Wc$ be an independent random stochastic chain which is balanced and
strongly aperiodic. Then, for any $t_0\geq 0$, the product
$W(k:t_0)=W(k)\cdots W(t_0+1)$ converges to a random stochastic matrix
$W(\infty:t_0)$ almost surely. Furthermore, for all $i,j$ in the same
connected component of the infinite flow graph of $\Wc$, we have
$W_{i}(\infty:t_0)=W_j(\infty:t_0)$ almost surely.
\end{theorem}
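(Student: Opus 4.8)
The plan is to translate the statement about matrix products into one about the linear averaging dynamics $x(k+1)=W(k+1)x(k)$, for which $x(k)=W(k:t_0)x(t_0)$; since stochastic matrices are determined by their action on vectors, almost sure convergence of $W(k:t_0)$ is equivalent to almost sure convergence of $x(k)$ for every deterministic $x(t_0)\in\Rm$, and it suffices to run $x(t_0)$ over the standard basis to recover the columns of the product. First I would pass to the expected chain $\bar A(k)=\EXP{W(k)}$, a deterministic stochastic chain, and invoke Kolmogorov's theorem to obtain an absolute probability sequence $\pc$, i.e.\ stochastic vectors with $\pi(k-1)^\top=\pi(k)^\top\bar A(k)$. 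Using this sequence I would introduce the time-varying quadratic comparison function
\[
V_k(x)=\sum_{i=1}^m\pi_i(k)x_i^2-\Big(\sum_{i=1}^m\pi_i(k)x_i\Big)^2 .
\]
Because the chain is independent, $M(k)=\pi(k)^\top x(k)$ is a martingale for the natural filtration $\Fk=\sigma(W(1),\dots,W(k))$, and a Jensen-type expansion shows $\EXP{V_{k+1}(x(k+1))\mid\Fk}\le V_k(x(k))$; the one-step decrease is a weighted sum of terms $\EXP{W_{ii}(k+1)W_{ij}(k+1)}\,(x_i(k)-x_j(k))^2$, which by strong aperiodicity is bounded below by $\gamma\sum_{i\ne j}\pi_i(k+1)\EXP{W_{ij}(k+1)}(x_i(k)-x_j(k))^2$. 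This realizes the time-varying quadratic Lyapunov function advertised in the introduction.

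The crucial and hardest step is to show that balanced-ness together with strong aperiodicity forces the weights to stay uniformly positive, $\pi_i(k)\ge\pstar$ for some $\pstar>0$ and all $i,k$ --- this is precisely what I expect places $\Wc$ in the class $\Pstar$. On the expected chain, balanced-ness reads $\bar A_{S\bar S}(k)\ge\al\,\bar A_{\bar S S}(k)$ and strong aperiodicity yields the diagonal bound $\bar A_{ii}(k)\ge\gamma/(1-\gamma)$ derived in the setup. The backward recursion for index sets,
\[
\pi_S(k-1)=\pi_S(k)-\sum_{i\in S,\,j\in\bar S}\pi_i(k)\bar A_{ij}(k)+\sum_{i\in\bar S,\,j\in S}\pi_i(k)\bar A_{ij}(k),
\]
shows that the net drift of weight out of any set $S$ is governed by the difference of out- and in-flows, which balanced-ness keeps comparable, while the diagonal bound guarantees that each node retains a definite fraction of its weight each step. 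Turning this into a uniform lower bound $\pim(k)\ge\pstar$ --- presumably by a geometric argument on the simplex that rules out a vanishing coordinate by playing the balanced in/out flow comparison against the diagonal mass retention --- is where the hypothesis ``balanced'' is used in an essential, non-probabilistic way, and I expect it to be the main obstacle.

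Granting $\Wc\in\Pstar$, the remaining assembly is comparatively routine. The function $V_k(x(k))$ is a nonnegative supermartingale, hence converges almost surely, and summing the one-step decreases and using $\pi_i\ge\pstar$ gives $\sum_k\sum_{i\ne j}\EXP{W_{ij}(k+1)}(x_i(k)-x_j(k))^2<\infty$ almost surely. By Kolmogorov's $0$--$1$ law the infinite flow graph $\Ginf$ is almost surely a fixed deterministic graph, equal to the infinite flow graph of the expected chain by Theorem~5 of \cite{TouriNedich:Approx}; thus every edge $\{i,j\}$ of $\Ginf$ satisfies $\sum_k\EXP{W_{ij}(k+1)}=\infty$, which together with the finiteness just obtained forces $x_i(k)-x_j(k)\to0$, and propagation along paths yields $x_i(k)-x_j(k)\to0$ for all $i,j$ in a common connected component. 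To upgrade this consensus to convergence of each coordinate I would use that the bounded martingale $M(k)=\pi(k)^\top x(k)$ converges and that, within a component, $x_j(k)$ stays within the vanishing spread of $M(k)$; handling several components requires the additional fact that cross-component total flow is almost surely finite, so that the dynamics asymptotically decouples into per-component averages, each carrying its own conserved quantity. Taking $x(t_0)$ to be each standard basis vector then gives almost sure convergence of $W(k:t_0)$ to a stochastic $W(\infty:t_0)$, and the component-consensus statement translates directly into $W_i(\infty:t_0)=W_j(\infty:t_0)$ for $i,j$ in the same connected component of $\Ginf$.
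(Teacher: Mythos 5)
Your skeleton matches the paper's architecture --- pass to the dynamics, inherit an absolute probability process from the expected chain, run a quadratic supermartingale, establish membership in $\Pstar$, combine summability with the infinite flow graph, then decouple components --- but the proposal leaves genuine gaps at precisely the two places where the real work lies. First, the $\Pstar$ step. You propose to take the absolute probability sequence that Kolmogorov's theorem provides for the expected chain and then prove, by a backward recursion on the simplex, that it is uniformly bounded below. No such proof can exist, because the statement is false for an arbitrary absolute probability sequence: the static chain $\{I\}$ is balanced and strongly aperiodic, and every constant sequence $\pi(k)\equiv\pi$ with $\pi$ stochastic --- including $\pi=e_1$, which has zero entries --- is an absolute probability sequence for it. Being bounded away from zero is a property of a well-chosen sequence, not of all of them, so the sequence must be constructed rather than analyzed. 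The paper's construction is a forward argument on the columns of the partial products: Lemma~\ref{lemma:infflowstable:boundedcolumns} (an induction showing the support $S_j(k)$ of column $j$ of $A(k:0)$ is nondecreasing and its minimum positive entry is at least $\gamma^{|S_j(k)|-1}$), Corollary~\ref{cor:sumcolumn} (hence $\frac{1}{m}e^TA(k:t_0)\geq\min(\frac{1}{m},\gamma^{m-1})e^T$ when positive entries are uniformly bounded below), Theorem~\ref{thm:boundedcolumns} (removing the uniform bound by writing each $A(k)$ as a convex combination of the finitely many extreme points of the polyhedron $\mathbf{B}_{\al,\beta}$ and realizing $\Ac$ as the expectation of a random chain whose sample paths consist of extreme points), and Theorem~\ref{thrm:balanceddeterministic} (taking $Q(k)=\lim_{r}A(t_r:k)$ and using $\{\frac{1}{m}e^TQ(k)\}$ as the bounded-below sequence). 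None of this machinery is present in, or recoverable from, your sketch.

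Second, the consensus step. From $\sum_k\EXP{W_{ij}(k)+W_{ji}(k)}=\infty$ for an edge $\{i,j\}$ of $\Ginf$, together with the almost sure finiteness of $\sum_k\EXP{W_{ij}(k+1)+W_{ji}(k+1)}(x_i(k)-x_j(k))^2$, you conclude $x_i(k)-x_j(k)\to0$. That inference is invalid: if $\sum_k c_k=\infty$, $d_k\geq 0$ and $\sum_k c_kd_k<\infty$, one can only conclude $\liminf_k d_k=0$ (take $c_k=0$ exactly on the stretches where $d_k$ is large). The disagreement of a pair can reopen, driven by other coordinates, during epochs in which the $(i,j)$ flow is negligible, so the pairwise spread need not converge. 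Closing exactly this hole is the content of Lemma~\ref{lemma:infflowstable:suminfinity}, whose proof is a cut argument inside a connected component: if $\limsup_k|x_{i_0}(k)-x_{j_0}(k)|=\al>0$, one orders the component, locates a gap of size $\al/(2q)$ across some cut, and shows that until the cross-cut flow accumulates $\al/(32q)$ the two sides of the cut stay $\al/(4q)$ apart, so each recurrence contributes a fixed $\beta>0$ to the weighted sum over \emph{all} pairs, which therefore diverges. Your proposal contains no substitute for this lemma. By contrast, your final step (cross-component flow is finite, so the dynamics decouples and per-component consensus upgrades to convergence of each coordinate) is a fair paraphrase of the paper's Lemma~\ref{lemma:random:extensionthrm1}, which makes it precise via an $\ell_1$-approximation by a block-diagonal chain together with Seneta's weak-to-strong ergodicity theorem; that part would still need to be written out, but it is the one place where your sketch and the paper genuinely coincide.
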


To prove Theorem~\ref{thrm:mainresult}, we develop some auxiliary results
in the forthcoming sections, while deferring the proof to the last section.

As an immediate consequence of Theorem~\ref{thrm:mainresult}, it follows that
$W(\infty:t_0)$ has rank at most $\tau$ where $\tau$ is the number of
the connected components of the infinite flow graph $\Ginf$ of $\Wc$.
Thus,
%As a particular instance of Theorem~\ref{thrm:mainresult},
if $\Ginf$ is a connected graph, the limiting random matrix
$W(\infty:t_0)=\lim_{k\to\infty}W(k:t_0)$ is a rank-one
random stochastic matrix almost surely, i.e.
$W(\infty:t_0)=ev^T(t_0)$ almost surely for some stochastic vector $v(t_0)$.
This and Theorem~\ref{thrm:mainresult} imply that: if
an independent random chain $\Wc$ is balanced and strongly aperiodic, then
$\Wc$ is almost surely strongly ergodic (as defined in \cite{SenetaCons})
if and only if the infinite flow graph of $\Wc$ is connected.

\section{Dynamic System Perspective}\label{sec:dynamics}
In order to prove Theorem~\ref{thrm:mainresult}, we establish some intermediate
results, some of which are applicable to a more general category of
random stochastic chains, namely adapted random chains.
For this, let $\Wc$ be a random chain adapted to a filtration $\Fc$. For an
integer $t_0\geq 0$ and a vector $v\in \Rm$,
consider the trivial random vector $x(t_0):\Omega\to\Rm$ defined by
$x(t_0,\omega)=v$ for all $\omega\in\Omega$. Now, recursively define:
  \begin{align}\label{eqn:dynsys}
    x(k+1)=W(k+1)x(k)\qquad\mbox{for all $k\geq t_0$}.
  \end{align}
Note that $x(t_0)$ is measurable with respect to the trivial $\sigma$-algebra
$\{\emptyset,\Omega\}$ and hence, it is measurable with respect to $\F_{t_0}$.
Also, since $\Wc$ is adapted to $\Fc$, it follows that for any $k>t_0$, $x(k)$
is measurable with respect to $\F_{k}$. We refer to $\xc$ as a random dynamics
driven by $\Wc$ started at the initial point $(t_0,v)\in \Rmo$.
We say that a given property holds for any dynamics $\xc$ driven by $\Wc$
if that property holds for any initial point $(t_0,v)\in \Rmo$.

If $\lim_{k\to\infty}W(k:t_0)=W(\infty:t_0)$ exists almost surely,
then the random dynamics $\xc$
converges to $W(\infty:t_0)v$ almost surely  for any initial point $(t_0,v)\in \Rmo$.
Also, note that
for any $i,j\in[m]$, we have
$\lim_{k\to\infty}\|W_i(k,t_0)-W_j(k,t_0)\|=0$ almost surely if and only if
$\lim_{k\to\infty} \left(x_i(k)-x_j(k)\right)=0$ almost surely for any
initial point. When verifying the latter relation, due to the linearity of the
dynamics, it suffices to check that
$\lim_{k\to\infty} \left(x_i(k)-x_j(k)\right)=0$ for all the initial points
of the form $(t_0,e_\ell)$ with $\ell\in [m]$,
where $\{e_1,\ldots,e_m\}$ is the standard basis for $\Rm$.

In order to study the limiting behavior of the products $W(k:t_0)$,
we study the limiting behavior of the dynamics $\xc$ driven by $\Wc$.
This enables us to use the dynamic system's tools
and its stability theory to draw conclusions about the limiting behavior
of the products $W(k:t_0)$.

\subsection{Why the Infinite Flow Graph}
In this section we provide a result showing
the relevance of the infinite flow graph
to the study of the product of stochastic matrices.
Let us consider a deterministic chain $\Ac$ of stochastic matrices and
let us define \textit{mutual ergodicity} and
\textit{an ergodic index} as follows.

 \begin{definition}
For an $m\times m$ chain $\Ac$ of stochastic matrices, we say that
an index $i\in[m]$ is ergodic if $\lim_{k\to\infty}A_i(k:t_0)$
exists for all $t_0\geq 0$.
Also, we say that two disctinct indices $i,j\in[m]$ are mutually ergodic
if $\lim_{k\to\infty}\|A_i(k:t_0)-A_j(k:t_0)\|=0$.
 \end{definition}

%By considering the initial points $x(t_0)=e_{\ell}$,
From the definition it immediately follows that an index $i\in [m]$ is ergodic
for a chain $\Ac$ if and only if $\lim_{k\to\infty}x_i(k)$ exists for
any dynamics $\xc$ driven by $\Ac$. Similarly, indices $i,j\in[m]$ are
mutually ergodic if and only if $\lim_{k\to\infty}\left(x_i(k)-x_j(k)\right)=0$
for any dynamics driven by $\Ac$.

The following result illustrates the relevance of the infinite flow graph to
the study of the products of stochastic matrices.
\begin{lemma}(\cite{TouriNedich:Approx}, Lemma~2)
Two distinct indices $i,j\in [m]$ are mutually ergodic only if
$i$ and $j$ belong to the same connected component of the infinite flow graph
$\Ginf$ of $\Ac$.
 \end{lemma}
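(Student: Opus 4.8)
The plan is to prove the contrapositive: if $i$ and $j$ lie in distinct connected components of $\Ginf$, then they fail to be mutually ergodic. Let $S$ be the connected component of $\Ginf$ containing $i$, so that $i\in S$ and $j\in\bar{S}$. Since $S$ is a full connected component, no edge of $\Ginf$ joins $S$ to $\bar{S}$; by the definition of the infinite flow graph this means $\sum_{k=1}^{\infty}\left(A_{pq}(k)+A_{qp}(k)\right)<\infty$ for every $p\in S$ and $q\in\bar{S}$. Summing over the finitely many such pairs yields $\sum_{k=1}^{\infty}\left(A_{S\bar{S}}(k)+A_{\bar{S}S}(k)\right)<\infty$, i.e.\ the total flow across the cut $\{S,\bar{S}\}$ is finite.

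Because this series converges, I would fix $\epsilon\in(0,1/2)$ and choose $t_0$ large enough that the tail satisfies $\sum_{k>t_0}\left(A_{S\bar{S}}(k)+A_{\bar{S}S}(k)\right)<\epsilon$. I then consider the dynamics $\xc$ driven by $\Ac$ with the initial point $(t_0,v)$, where $v$ is the indicator vector of $S$ (so $v_\ell=1$ for $\ell\in S$ and $v_\ell=0$ otherwise); recall from the preceding discussion that it suffices to exhibit a single initial point for which $x_i(k)-x_j(k)\not\to0$. Since each $A(k)$ is row-stochastic, every coordinate $x_\ell(k)$ remains in $[0,1]$.

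The heart of the argument is a coordinate-tracking estimate. For $\ell\in S$, writing $x_\ell(k+1)=\sum_{p\in S}A_{\ell p}(k+1)x_p(k)+\sum_{p\in\bar{S}}A_{\ell p}(k+1)x_p(k)$, dropping the nonnegative second sum and bounding the first from below by $\min_{q\in S}x_q(k)$ gives $x_\ell(k+1)\ge \min_{q\in S}x_q(k)-\sum_{p\in\bar{S}}A_{\ell p}(k+1)$. Taking the minimum over $\ell\in S$ and bounding the per-row leakage by the cut total, I obtain $\min_{\ell\in S}x_\ell(k+1)\ge \min_{\ell\in S}x_\ell(k)-A_{S\bar{S}}(k+1)$. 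Telescoping from $t_0$ and using $\min_{\ell\in S}x_\ell(t_0)=1$ shows $\min_{\ell\in S}x_\ell(k)\ge 1-\epsilon$ for all $k\ge t_0$. A symmetric computation for $\bar{S}$, starting from $\max_{\ell\in\bar{S}}x_\ell(t_0)=0$, yields $\max_{\ell\in\bar{S}}x_\ell(k)\le\epsilon$ for all $k\ge t_0$.

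Combining these bounds, and using $i\in S$ and $j\in\bar{S}$, gives $x_i(k)-x_j(k)\ge(1-\epsilon)-\epsilon=1-2\epsilon>0$ for all $k\ge t_0$. Hence $x_i(k)-x_j(k)\not\to0$ for this dynamics, so by the equivalence recorded above $i$ and $j$ are not mutually ergodic, which establishes the contrapositive. The main obstacle is that convergence of the flow series gives only finiteness, not smallness, of the cut flow; the essential device is therefore to shift the initial time $t_0$ forward so that the tail flow falls below $\epsilon$, which is legitimate precisely because mutual ergodicity must hold for every initial time. A secondary subtlety is arranging the monotone min/max recursion so that the total per-step leakage is controlled by $A_{S\bar{S}}(k)$ and $A_{\bar{S}S}(k)$ rather than by individual entries.
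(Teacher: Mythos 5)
Your proof is correct. Note, however, that the paper itself offers no proof of this statement: the lemma is imported verbatim from \cite{TouriNedich:Approx} (Lemma~2), so there is no in-paper argument to compare yours against. Your contrapositive argument is sound on its own terms: the cut flow $\sum_{k}\bigl(A_{S\bar{S}}(k)+A_{\bar{S}S}(k)\bigr)$ is finite because $S$ is a full connected component of $\Ginf$; shifting the initial time $t_0$ so that the tail falls below $\epsilon$ is legitimate precisely because mutual ergodicity (via the equivalence stated just before the lemma) requires $x_i(k)-x_j(k)\to 0$ for \emph{every} initial point $(t_0,v)$; and the telescoping min/max estimates, with per-row leakage bounded by $A_{S\bar{S}}(k+1)$ and $A_{\bar{S}S}(k+1)$ respectively, give the uniform separation $x_i(k)-x_j(k)\geq 1-2\epsilon>0$, which rules out mutual ergodicity. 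The only cosmetic imprecision is the phrase ``bounding the first from below by $\min_{q\in S}x_q(k)$'': the first sum is bounded below by $\min_{q\in S}x_q(k)\sum_{p\in S}A_{\ell p}(k+1)$, and one then uses $\sum_{p\in S}A_{\ell p}(k+1)=1-\sum_{p\in\bar{S}}A_{\ell p}(k+1)$ together with $\min_{q\in S}x_q(k)\leq 1$ to reach the displayed inequality; the conclusion is unaffected. Your flow-bounding technique is moreover the same style of estimate the paper deploys elsewhere, e.g.\ the appeal to Lemma~1 of \cite{ErgodicityPaper} inside the proof of Lemma~\ref{lemma:infflowstable:suminfinity}, so the argument sits naturally within the paper's toolkit.
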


Generally, if $i$ and $j$ are mutually ergodic indices, it is not necessarily
true that they are ergodic indices. As an example,
consider the $4\times 4$ stochastic chain $\Ac$ defined by:
    \begin{align}\nonumber
    A(2k)=
    \left[
      \begin{array}{cccc}
        1&0&0&0\\
        1&0&0&0\\
        1&0&0&0\\
        0&0&0&1
      \end{array}\right],\quad
    A(2k+1)=
    \left[
      \begin{array}{cccc}
        1&0&0&0\\
        0&0&0&1\\
        0&0&0&1\\
        0&0&0&1
      \end{array}\right]\qquad\hbox{for all $k\geq 1$}.
    \end{align}
It can be verified that for any starting time $t_0\geq 0$ and any $k> t_0$,
we have $A(k:t_0)=A(k)$. Thus, it follows that
indices $2$ and $3$ are mutually ergodic, while
$\lim_{k\to\infty}A_{2}(k:t_0)$ and $\lim_{k\to\infty}A_3(k:t_0)$ do not exist.

The following result shows that under special circumstances,
we can assert that some indices are ergodic
if we know that a certain mutual ergodicity pattern exists in a chain.
\begin{lemma}\label{lemma:random:extensionthrm1}
Let $S$ be a connected component of the infinite flow graph $\Ginf$ of a chain
$\Ac$. Suppose that indices $i$ and $j$ are mutually ergodic
for all distinct $i,j\in S$. Then, every index $i\in S$ is ergodic.
     \end{lemma}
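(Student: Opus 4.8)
The plan is to argue entirely in terms of the dynamics $\xc$ driven by $\Ac$, using the two characterizations recorded just above: an index $i$ is ergodic iff $\lim_{k\to\infty}x_i(k)$ exists for any dynamics, and $i,j$ are mutually ergodic iff $x_i(k)-x_j(k)\to 0$ for any dynamics. First I would fix an arbitrary initial point $(t_0,v)\in\Rmo$ and set $C=\|v\|_\infty$; since every $A(k:t_0)$ is stochastic, each coordinate $x_\ell(k)$ is a convex combination of the entries of $v$ and hence lies in $[-C,C]$ for all $k\ge t_0$. It therefore suffices to show that $\lim_{k\to\infty}x_i(k)$ exists for every $i\in S$.

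The key structural input is that $S$ is a \emph{connected component}, hence maximal, of $\Ginf$. Consequently, for every pair $i\in S$, $j\in\bar S$ the edge $\{i,j\}$ is absent from the edge set of $\Ginf$, so $\sum_{k}A_{ij}(k)<\infty$; summing over the finitely many such pairs gives $\sum_{k}A_{S\bar S}(k)<\infty$. Thus the total mass flowing out of $S$ is summable, which is precisely what asymptotically decouples $S$ from the rest of the chain.

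Next I would define $M_S(k)=\max_{i\in S}x_i(k)$ and $\mu_S(k)=\min_{i\in S}x_i(k)$. Splitting the update $x_i(k+1)=\sum_{j\in S}A_{ij}(k+1)x_j(k)+\sum_{j\in\bar S}A_{ij}(k+1)x_j(k)$ for $i\in S$, bounding the inner-$S$ part by $M_S(k)$ (the row mass on $S$ equals $1-\sum_{j\in\bar S}A_{ij}(k+1)$) and the outside part by $C$, I obtain $M_S(k+1)\le M_S(k)+2C\,A_{S\bar S}(k+1)$, and symmetrically $\mu_S(k+1)\ge\mu_S(k)-2C\,A_{S\bar S}(k+1)$. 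Since $\sum_k A_{S\bar S}(k)<\infty$ and both sequences are bounded, the elementary fact that a bounded sequence satisfying $a_{k+1}\le a_k+\beta_k$ with $\beta_k\ge 0$ and $\sum_k\beta_k<\infty$ must converge—applied to the non-increasing auxiliary sequence $b_k=M_S(k)+\sum_{\ell\ge k}2C\,A_{S\bar S}(\ell+1)$, whose tail sum vanishes—yields $M_S(k)\to M_S^*$ and $\mu_S(k)\to\mu_S^*$.

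Finally I would invoke the hypothesis: since every pair $i,j\in S$ is mutually ergodic, $x_i(k)-x_j(k)\to 0$, so $M_S(k)-\mu_S(k)=\max_{i,j\in S}\bigl(x_i(k)-x_j(k)\bigr)\to 0$ and hence $M_S^*=\mu_S^*$. As $\mu_S(k)\le x_i(k)\le M_S(k)$ for every $i\in S$, the squeeze gives $\lim_{k\to\infty}x_i(k)=M_S^*$ for each $i\in S$; since $(t_0,v)$ was arbitrary, every $i\in S$ is ergodic. The crux of the argument—and the step requiring the most care—is establishing the almost-monotonicity of $M_S$ and $\mu_S$, which depends essentially on $S$ being a \emph{full} connected component so that the cross-cut mass $A_{S\bar S}(k)$ is summable; for a non-maximal subset both this estimate and the conclusion would break down.
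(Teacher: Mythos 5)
Your proof is correct, and it takes a genuinely different route from the paper's. The paper reduces the lemma to cited machinery: it builds a block-diagonal chain $\Bc$ by zeroing every entry of $A(k)$ crossing the cut between $S$ and $\bar S$ and pushing that mass onto the diagonal; the summability of the cross-cut flow (the same fact you isolate from maximality of the component) makes $\Bc$ an $\ell_1$-approximation of $\Ac$, so mutual ergodicity transfers to the block $\{B_1(k)\}$; Seneta's theorem that weak ergodicity implies strong ergodicity is then applied to that block, and ergodicity is transferred back to $\Ac$ by the $\ell_1$-approximation lemma again. You instead argue directly on the dynamics: the summability $\sum_k A_{S\bar S}(k)<\infty$ yields the almost-monotonicity estimates $M_S(k+1)\le M_S(k)+2C\,A_{S\bar S}(k+1)$ and $\mu_S(k+1)\ge \mu_S(k)-2C\,A_{S\bar S}(k+1)$ (valid since $|x_\ell(k)|\le C$ and $\sum_{j\in\bar S}A_{ij}(k+1)\le A_{S\bar S}(k+1)$), so both extremes converge; mutual ergodicity collapses their gap, and the squeeze gives convergence of every coordinate in $S$, hence ergodicity of every index in $S$ by the equivalence stated before the lemma. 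In effect you have inlined both ingredients the paper cites: your perturbation bounds do the work of the $\ell_1$-approximation transfer, and your max/min argument is exactly the classical proof of the weak-to-strong ergodicity implication specialized to this setting. What the paper's route buys is brevity and coherence with its broader framework, where $\ell_1$-approximation is a reusable concept; what yours buys is a self-contained, elementary, quantitative proof requiring no external results. Your closing remark that the argument would fail for a non-maximal $S$ is also the right diagnosis of where the hypothesis is used.
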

     \begin{proof}
Without loss of generality let us assume that $S=\{1,\ldots,i^*\}$ for some
$i^*\in [m]$. Let $\bar S$ be the complement of $S$.
For the given chain $\Ac$ and the connected component $S$,
let the chain $\Bc$ be defined by:
       \begin{align}\nonumber%\label{eqn:random:diagonalapprox}
        B_{ij}(k)=\left\{
        \begin{array}{ll}
        A_{ij}(k)&\mbox{if $i\not=j$ and $i,j\in S$ or $i,j\in \bar{S}$},\\
        0&\mbox{if $i\not=j$ and $i\in S,j\in\bar{S}$ or $i\in\bar{S},j\in{S}$},\\
        A_{ii}(k)+\sum_{\ell\in\bar{S}}A_{i\ell}(k)&\mbox{if $i=j\in S$},\\
        A_{ii}(k)+\sum_{\ell\in{S}}A_{i\ell}(k)&\mbox{if $i=j\in \bar{S}$}.
        \end{array}\right.
        \end{align}
Then, $B(k)$ has the block diagonal structure of the following form
       \[B(k)=\left[\begin{array}{cc}
        B_1(k)&0\\0&B_2(k)
        \end{array}\right]\qquad \mbox{for all $k\geq1$}.\]
By construction the chain $\Bc$ is stochastic.
It can be verified that
$\sum_{k=1}^\infty|A_{ij}(k)-B_{ij}(k)|<\infty$ for all
$i,j\in[m]$. Thus, $\Bc$ is an $\ell_1$-approximation of $\Ac$ as defined
in~\cite{TouriNedich:Approx}. Then, by Lemma~1 in \cite{TouriNedich:Approx}, it
follows that indices $i$ and $j$ are mutually ergodic for the chain
$\Bc$ for all distinct $i,j\in S$. By the block diagonal form of $\Bc$,
it follows that $i$ and $j$ are mutually ergodic
for the $|S|\times |S|$ chain $\{B_1(k)\}$ and all $i,j\in S$.
This, however, implies that the chain $\{B_1(k)\}$ is weakly ergodic
(as defined in \cite{SenetaCons}) and,
as proven in Theorem~1 in \cite{SenetaCons}, this further implies that
$\{B_1(k)\}$ is strongly ergodic, i.e. any index $i\in S$ is ergodic for
$\{B_1(k)\}$.
Again, by the application of Lemma~1 in~\cite{TouriNedich:Approx},
we conclude that any index $i\in S$ is ergodic for $\Ac$.
     \end{proof}

% Not sure if we will ever need this? ---below
%Note that for a random chain $\Wc$, mutual ergodicity between two
%distinct indices $i,j\in[m]$ holds on a measurable subset of $\Omega$.
%Similarly, an index $i$ is ergodic  on a measurable subset of $\Omega$.

  \subsection{Time-varying Lyapunov Functions}\label{sec:lyapunov}
Here, we show that under general conditions, a rich family of time-varying Lyapunov
functions exists for the dynamics $\xc$ driven by a random chain $\Wc$.

Let us define an absolute probability process for an adapted chain $\Wc$,
which is an extension of the concept of the absolute probability sequence
introduced by A.~Kolmogorov for deterministic chains
in~\cite{KolmogoroffMarkov}.

\begin{definition}\label{def:absoluteprobability}
We say that a random (vector) process $\pc$ is an absolute probability
process for a random chain $\Wc$ adapted to $\Fc$ if
     \begin{enumerate}
       \item the random process $\pc$ is adapted to $\Fc$,
       \item the vector $\pi(k)$ is stochastic almost surely
         for all $k\geq 1$, and
       \item the following relation holds almost surely
  \[\EXP{\pi^T(k+1)W(k+1)\mid \F_{k}}=\pi^T(k)\qquad\mbox{for all $k\geq 0$}.\]
     \end{enumerate}
\end{definition}
When an absolute probability process exists for a chain, we say that
the chain admits an absolute probability process.

For a deterministic chain of stochastic matrices $\Ac$, Kolmogorov showed
in~\cite{KolmogoroffMarkov} that there exists a sequence of stochastic vectors
$\{v(k)\}$ such that $v^T(k+1)A(k+1)=v^T(k)$ for all $k\geq 0$.
Note that, for an independent random chain,
any absolute probability sequence for the expected chain is an
absolute probability process for the random chain.
Thus, the existence of an absolute probability process for an
independent random chain of stochastic matrices follows
immediately from the Kolmogorov's existence result.
As another non-trivial example of random chains that admit an absolute
probability process, one may consider an adapted random chain
$\Wc$ that is doubly stochastic almost surely. In this case,
the static sequence $\{\frac{1}{m}e\}$ is an absolute probability process
for $\Wc$, where $e\in\mathbb{R}^m$ is the vector with all components equal
to 1.

Now, suppose that we have an adapted chain $\Wc$ which admits
an absolute probability sequence $\pc$. Also, let $g:\R\to\R$ be an
arbitrary convex function. Let us define
the function $V_{g,\pi}:\re^m\times\mathbb{Z}^+\to\mathbb{R}$, as follows:
\begin{align}\label{eqn:Vgn}
     V_{g,\pi}(x,k)=\sum_{i=1}^{m}\pi_i(k)g(x_i)-g(\pi^T(k)x)
  \qquad \mbox{for all $x\in\mathbb{R}^m$ and all $k\geq 0$}.
   \end{align}
   From the definition of an absolute probability process, it follows that
$V_{g,\pi}(x(k),k)$ is measurable with respect to $\F_k$ for any
dynamics $\xc$ driven by a chain $\Wc$ that is adapted to $\Fc$.
Also, since $\pi(k)$ is almost surely stochastic vector and
$g$ is a convex function, it follows that for any $x\in \Rm$,
we have $V_{g,\pi}(x,k)\geq 0$ almost surely for all $k\geq 0$.

%%%% Below is not totally true - does not seem relevan to the rest
%Since $g$ is assumed to be a convex function over $\R$, thus its derivative
%$g'$ exists over $\R$. Note that in relation \eqref{eqn:Vgn}, if we subtract
%the term $g'(\pi^T(k)x)\sum_{i=1}^m\pi_i(k)(x_i-\pi^T(k)x)=0$, then, we have
%$V_{g,\pi}(x,k)=\sum_{i=1}^m\pi_i(k)D_g(x_i,\pi^T(k)x)$ where
%$D_g(x,y)=g(x)-g(y)-g'(y)(x-y)$ is the Bregman distance of $x$ and $y$,
%assuming $g$ is a strongly convex function \cite{Bregman1967}.

Next, we show that $V_{g,\pi}$ is a time-varying Lyapunov function for
the dynamics~\eqref{eqn:dynsys} for any convex function $g$. In particular,
we prove that
$\{V_{g,\pi}(x(k),k)\}$ is a super-martingale sequence irrespective of
the initial point for the dynamics $\xc$.
 \begin{theorem}\label{thrm:compairsonfunctions}
 Let $\Wc$ be an adapted chain that admits an absolute probability process
$\pc$. Then, for the dynamics~\eqref{eqn:dynsys} started at
any initial point $(t_0,v)\in\Rmo$, we have
 \[\EXP{V_{g,\pi}(x(k+1),k+1)\mid \F_k}\leq V_{g,\pi}(x(k),k)
  \qquad\mbox{for all $k\ge t_0$}.\]
 \end{theorem}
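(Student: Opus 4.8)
The plan is to decompose $V_{g,\pi}(x(k+1),k+1)$ into its convex part $\sum_i \pi_i(k+1) g(x_i(k+1))$ and its concave correction $-g(\pi^T(k+1)x(k+1))$, and to bound the conditional expectation of each separately, using in both places only the defining identity $\EXP{\pi^T(k+1)W(k+1)\mid\F_k}=\pi^T(k)$ together with Jensen's inequality. Before starting I would note that since every $W(k)$ is row-stochastic, each component $x_i(k+1)=\sum_j W_{ij}(k+1)x_j(k)$ is a convex combination of the $x_j(k)$; hence by induction all components of $x(k)$ lie in the fixed compact interval $[\min_\ell v_\ell,\max_\ell v_\ell]$ for every $k\ge t_0$. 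A convex $g$ is continuous and therefore bounded on this interval, so every term appearing below is bounded and all conditional expectations are well defined.

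For the convex part, I would use that the $i$-th row of $W(k+1)$ is a stochastic vector, so Jensen's inequality gives $g(x_i(k+1))\le\sum_j W_{ij}(k+1)g(x_j(k))$ pointwise. Multiplying by $\pi_i(k+1)\ge 0$ and summing over $i$ yields $\sum_i \pi_i(k+1)g(x_i(k+1))\le\sum_j (\pi^T(k+1)W(k+1))_j\, g(x_j(k))$. Since $x(k)$, and hence each $g(x_j(k))$, is $\F_k$-measurable, I can pull these factors out of the conditional expectation and apply the absolute-probability identity to the coefficient $(\pi^T(k+1)W(k+1))_j$, obtaining $\EXP{\sum_i \pi_i(k+1)g(x_i(k+1))\mid\F_k}\le\sum_j \pi_j(k)g(x_j(k))$.

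For the concave correction, I would write $\pi^T(k+1)x(k+1)=\pi^T(k+1)W(k+1)x(k)$ and apply the conditional Jensen inequality for the convex function $g$: since $x(k)$ is $\F_k$-measurable it factors out of the inner conditional expectation, so $\EXP{g(\pi^T(k+1)W(k+1)x(k))\mid\F_k}\ge g(\EXP{\pi^T(k+1)W(k+1)\mid\F_k}\,x(k))=g(\pi^T(k)x(k))$. Negating this lower bound and combining with the previous estimate gives exactly $\EXP{V_{g,\pi}(x(k+1),k+1)\mid\F_k}\le\sum_j\pi_j(k)g(x_j(k))-g(\pi^T(k)x(k))=V_{g,\pi}(x(k),k)$.

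The two Jensen steps run in opposite directions — row-stochasticity bounds the convex part from above, while conditional Jensen bounds the negated concave part from below — so the main point to get right is that both reductions funnel through the same quantity $\pi^T(k+1)W(k+1)$, to which the absolute-probability identity applies only after the $\F_k$-measurable data $x(k)$ has been extracted from the conditioning. I expect the only genuine subtlety to be this bookkeeping of measurability and the legitimacy of interchanging the (finite) sums with the conditional expectation, which the boundedness observation above secures; notably, no independence or stationarity of $\Wc$ is needed, so the result holds for any adapted chain admitting an absolute probability process.
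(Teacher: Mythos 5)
Your proposal is correct and follows essentially the same route as the paper's proof: both arguments use row-stochasticity plus convexity of $g$ to bound $\sum_i\pi_i(k+1)g(x_i(k+1))$, invoke the absolute probability identity $\EXP{\pi^T(k+1)W(k+1)\mid\F_k}=\pi^T(k)$ after extracting the $\F_k$-measurable terms $g(x_j(k))$, and apply conditional Jensen's inequality to handle $-g(\pi^T(k+1)x(k+1))$. Splitting $V_{g,\pi}$ into the two pieces up front and adding the boundedness remark are only cosmetic refinements of the paper's single chain of inequalities.
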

 \begin{proof}
    By the definition of $V_{g,\pi}$ in \eqref{eqn:Vgn},
we have almost surely
    \begin{align}\label{eqn:infflowstable:samplepathdecrease}
    V_{g,\pi}(x(k+1),k+1)&
=\sum_{i=1}^m\pi_i(k+1)g(x_i(k+1))-g(\pi^T(k+1)x(k+1))\cr
    &=\sum_{i=1}^m\pi_i(k+1)g([W(k+1)x(k)]_i)-g(\pi^T(k+1)x(k+1))\cr
    &\leq \sum_{i=1}^m\pi_i(k+1)
\sum_{j=1}^mW_{ij}(k+1)g(x_j(k))-g(\pi^T(k+1)x(k+1)),
    \end{align}
where in the second equality we use $[\cdot]_i$ to denote the $i$th component
of a vector, while the inequality is obtained
by using the convexity of $g(\cdot)$ and
the fact that matrix $W(k)$ is stochastic almost surely.
Since $\pc$ is an absolute probability process for $\Wc$,
it follows that $\EXP{\pi^T(k+1)W(k+1)\mid \F_k}=\pi^T(k)$.
Also, since $x(k)$ is measurable with respect to $\F_k$, by taking
the conditional expectation with respect to $\F_k$ on both sides of
Eq.~\eqref{eqn:infflowstable:samplepathdecrease}, we obtain almost surely
    \begin{align*}
    \EXP{V_{g,\pi}(x(k+1),k+1)\mid \F_k}
 &\leq \sum_{j=1}^m\pi_j(k)g(x_j(k))-\EXP{g(\pi^T(k+1)x(k+1))\mid \F_k}\cr
    & \leq \sum_{j=1}^m\pi_j(k)g(x_j(k))-g(\EXP{\pi^T(k+1)x(k+1)\mid \F_k}),
    \end{align*}
    where the last inequality follows by the convexity of $g$ and
Jensen's inequality. The result follows by using $x(k+1)=W(k+1)x(k)$
and the definition of absolute probability process.
 \end{proof}

Theorem~\ref{thrm:compairsonfunctions} shows that
the dynamics~\eqref{eqn:dynsys} admits infinitely many time-varying Lyapunov functions,
provided that $\Wc$ admits an absolute probability process.

Since $V_{g,\pi}(x(k),k)\geq 0$ almost surely for all $k\geq 0$, it follows
that $\{V_{g,\pi}(x(k),k)\}$ is a bounded super-martingale.
Hence, it is convergent almost surely irrespective of the initial point of
the dynamics $\{x(k)\}$ and the choice of the convex function $g$.
  \begin{corollary}
    Let $\Wc$ be an adapted chain that admits an absolute probability process
$\pc$. Then, for any dynamics $\xc$ driven by $\Wc$ and for any convex
function $g:\mathbb{R}\to\mathbb{R}$,
the limit $\lim_{k\to\infty}V_{g,\pi}(x(k),k)$ exists almost surely.
  \end{corollary}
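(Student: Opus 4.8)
The plan is to recognize the sequence $\{V_{g,\pi}(x(k),k)\}_{k\geq t_0}$ as a nonnegative supermartingale adapted to $\Fc$ and then to invoke the supermartingale convergence theorem. Three of the four ingredients this requires are already established in the preceding development, and only integrability needs a short additional argument.

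First I would assemble the supermartingale structure. Adaptedness is immediate: as noted right after the definition~\eqref{eqn:Vgn}, the random variable $V_{g,\pi}(x(k),k)$ is $\F_k$-measurable because $\pi(k)$ is $\F_k$-measurable (the process $\pc$ is adapted) and $x(k)$ is $\F_k$-measurable (by construction of the dynamics~\eqref{eqn:dynsys}). Nonnegativity, $V_{g,\pi}(x(k),k)\geq 0$ almost surely, is likewise already recorded: since $\pi(k)$ is almost surely a stochastic vector and $g$ is convex, Jensen's inequality gives $\sum_{i=1}^m\pi_i(k)g(x_i)\geq g(\pi^T(k)x)$. Finally, the one-step decrease $\EXP{V_{g,\pi}(x(k+1),k+1)\mid\F_k}\leq V_{g,\pi}(x(k),k)$ is exactly the content of Theorem~\ref{thrm:compairsonfunctions}.

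The step requiring care is integrability, which is needed to legitimately call $\{V_{g,\pi}(x(k),k)\}$ a supermartingale and apply the convergence theorem. Here I would exploit that the dynamics is confined to a compact set. For the dynamics started at $(t_0,v)$, each component $x_i(k)=[W(k:t_0)v]_i$ is a convex combination of the entries of $v$, since the rows of a product of stochastic matrices are stochastic vectors; hence $x_i(k)\in[\min_\ell v_\ell, \max_\ell v_\ell]$ for every $i$ and every $k$, and the same interval contains $\pi^T(k)x(k)$, again because $\pi(k)$ is stochastic. As $g:\R\to\R$ is convex and finite-valued, it is continuous, hence bounded on this compact interval, so $V_{g,\pi}(x(k),k)$ is a bounded random variable, in particular integrable, uniformly in $k$.

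With all four properties in place, the conclusion follows from the supermartingale convergence theorem: a nonnegative (indeed here uniformly bounded) supermartingale converges almost surely. Thus $\lim_{k\to\infty}V_{g,\pi}(x(k),k)$ exists almost surely, for every initial point $(t_0,v)\in\Rmo$ and every convex $g$. I do not anticipate a genuine obstacle, as the corollary is essentially a direct reading of Theorem~\ref{thrm:compairsonfunctions}; the only substantive point is the boundedness argument above, which guarantees that the supermartingale is well defined and integrable.
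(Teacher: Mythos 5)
Your proof is correct and follows essentially the same route as the paper: identify $\{V_{g,\pi}(x(k),k)\}$ as a nonnegative supermartingale via Theorem~\ref{thrm:compairsonfunctions} and apply the supermartingale convergence theorem. The only difference is that you explicitly justify the boundedness/integrability (via the dynamics remaining in the compact interval $[\min_\ell v_\ell,\max_\ell v_\ell]$ and continuity of the convex function $g$ there), a point the paper asserts without proof when it calls the sequence a ``bounded super-martingale.''
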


  \subsection{Time-varying Quadratic Lyapunov Function}
  In the sequel, we focus on the particular choice of
function $g(s)=s^2$ in relation \eqref{eqn:Vgn}. For convenience, we let
  \[V_{\pi}(x,k)=\sum_{i=1}^m\pi_i(k)(x_i-\pi^T(k)x)^2
  =\sum_{i=1}^m\pi_i(k)x^2_i-(\pi^T(k)x)^2.\]
For this function,
we can provide a lower bound for the decrease of the conditional expectations
$\EXP{V_g(x(k+1),k+1)\mid \F_k}$, which is exact under certain conditions.

  \begin{theorem}\label{thrm:quaddecrease}
    Let $\Wc$ be an adapted random chain with an absolute probability process
$\pc$. Then, for any dynamics $\xc$ driven by $\Wc$, we have almost surely
    \begin{align*}%\label{eqn:infflowstable:quaddecrease}
      \EXP{V_\pi(x(k+1),k+1)\mid \F_k}\leq V_{\pi}(x(k),k)
     -\sum_{i<j}H_{ij}(k)(x_i(k)-x_j(k))^2\qquad\hbox{for all $k\ge t_0$,}
    \end{align*}
where
$H(k)=\EXP{W^T(k+1)\diag(\pi(k+1))W(k+1)\mid \F_k}$ with $\diag(v)$
denoting the diagonal matrix induced by a vector $v$
(i.e., with components $v_i$ on the main diagonal), and
$\sum_{i<j}=\sum_{i=1}^m\sum_{j=i+1}^m$.
Furthermore, if $\pi^T(k+1)W(k+1)=\pi^T(k)$ almost surely, then the
inequality holds as an equality.
  \end{theorem}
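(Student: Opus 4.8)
The plan is to exploit the linearity of the dynamics together with the absolute probability property, reducing the claim to one quadratic identity and a single application of Jensen's inequality. Write $W=W(k+1)$, $\pi'=\pi(k+1)$, $\pi=\pi(k)$, and $x=x(k)$ for brevity, and recall $x(k+1)=Wx$. Since $x$ is $\F_k$-measurable, I would first expand
\[V_\pi(x(k+1),k+1)=\sum_{i=1}^m\pi'_i\,([Wx]_i)^2-((\pi')^T Wx)^2=x^T W^T\diag(\pi')W x-((\pi')^T W x)^2,\]
and then take $\EXP{\cdot\mid\F_k}$. The first term is handled by pulling the ($\F_k$-measurable) vector $x$ out of the conditional expectation, yielding $x^T H(k)x$ with $H(k)=\EXP{W^T\diag(\pi')W\mid\F_k}$, which is symmetric.

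The crucial observation, and the step I expect to carry the weight of the argument, is that the row sums of $H(k)$ equal $\pi(k)$. Indeed $H(k)e=\EXP{W^T\diag(\pi')We\mid\F_k}=\EXP{W^T\pi'\mid\F_k}$, using $We=e$ (stochasticity of $W$) and $\diag(\pi')e=\pi'$; and $\EXP{W^T\pi'\mid\F_k}=(\EXP{(\pi')^T W\mid\F_k})^T=\pi$ by the defining property of the absolute probability process. Granting $H(k)e=\pi$, a direct expansion of $\sum_{i<j}H_{ij}(k)(x_i-x_j)^2$, combined with the symmetry of $H(k)$, gives the exact algebraic identity $x^T H(k)x=\sum_{i=1}^m\pi_i x_i^2-\sum_{i<j}H_{ij}(k)(x_i-x_j)^2$. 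This converts the quadratic form $x^T H(k)x$ into the diagonal sum $\sum_i\pi_i x_i^2$, which is the first part of $V_\pi(x,k)$, minus the disagreement term.

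For the remaining term I would apply the conditional Jensen inequality to the convex map $t\mapsto t^2$. Setting $y=(\pi')^T W$, a random row vector with $\EXP{y\mid\F_k}=\pi^T$, and using the $\F_k$-measurability of $x$, one obtains $\EXP{(yx)^2\mid\F_k}\geq(\EXP{yx\mid\F_k})^2=(\pi^T x)^2$, hence $-\EXP{((\pi')^T Wx)^2\mid\F_k}\leq-(\pi^T x)^2$. Combining the two pieces yields $\EXP{V_\pi(x(k+1),k+1)\mid\F_k}\leq\sum_i\pi_i x_i^2-(\pi^T x)^2-\sum_{i<j}H_{ij}(k)(x_i-x_j)^2$, and since $\sum_i\pi_i x_i^2-(\pi^T x)^2=V_\pi(x(k),k)$, the stated inequality follows.

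Finally, for the equality claim I would note that the only inequality used is the Jensen step. If $(\pi')^T W=\pi^T$ almost surely, then $yx=\pi^T x$ is itself $\F_k$-measurable, so $\EXP{(yx)^2\mid\F_k}=(\pi^T x)^2$, Jensen holds with equality, and the entire chain of estimates collapses to equalities. The main obstacle is purely the bookkeeping of the quadratic identity and verifying $H(k)e=\pi$; once that is in place, the rest is routine.
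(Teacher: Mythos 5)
Your proof is correct and takes essentially the same route as the paper's: the paper works with the difference $V_{\pi}(x(k),k)-V_{\pi}(x(k+1),k+1)$ and the matrix $L(k)=\diag(\pi(k))-W^T(k+1)\diag(\pi(k+1))W(k+1)$, whose conditional expectation is symmetric and annihilates $e$ (exactly your observation $H(k)e=\pi(k)$, repackaged), and it handles the subtracted square via the supermartingale property of $-(\pi^T(k)x(k))^2$, which is precisely your conditional Jensen step. The two arguments differ only in bookkeeping, and your identification of Jensen as the sole source of inequality matches the paper's treatment of the equality case.
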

  \begin{proof}
    We have for all $k\ge t_0$,
    \begin{align}\label{eqn:infflowstable:differentform}
    V_{\pi}(x(k),k)
    =\sum_{i=1}^{m}\pi_i(k)x_i^2(k)-(\pi^T(k)x(k))^2
   =x^T(k)\diag(\pi(k))x(k)-(\pi^T(k)x(k))^2.
    \end{align}
 Thus, by letting $\Delta(x(k),k)=V_{\pi}(x(k),k)-V_{\pi}(x(k+1),k+1)$
and using $x(k+1)=W(k+1)x(k)$, we obtain for all $k\ge t_0$,
\begin{align}\nonumber
\Delta(x(k),k)&=x^T(k)\diag(\pi(k))x(k)-(\pi^T(k)x(k))^2\cr
&\qquad-\left\{x^T(k+1)\diag(\pi(k+1))x(k+1)-(\pi^T(k+1)x(k+1))^2\right\}\cr
&=x^T(k)\left[\diag(\pi(k))-W^T(k+1)\diag(\pi(k+1))W(k+1)\right]x(k)\cr
&\qquad+\left\{
(\pi^T(k+1)x(k+1))^2-(\pi^T(k)x(k))^2\right\}\cr
&=x^T(k)L(k)x(k)+\left\{
(\pi^T(k+1)x(k+1))^2-(\pi^T(k)x(k))^2\right\},
\end{align}
where $L(k)=\diag(\pi(k))-W^T(k+1)\diag(\pi(k+1))W(k+1)$.

Note that the sequence $\{\pi^T(k)x(k)\}$ is a martingale,
implying that $\{-(\pi^T(k)x(k))^2\}$ is a super-martingale.
Thus, by taking the conditional expectation on both sides of the
preceding equality and noticing that $x(k)$ is measurable with respect to
$\F_k$, we have almost surely
\begin{align}\label{eqn:infflowstable:Delta}
\EXP{\Delta(x(k),k)\mid\F_k}&\geq \EXP{x^T(k)L(k)x(k)\mid\F_k}
=x^T(k)\EXP{L(k)\mid\F_k}x(k)\quad\mbox{for all $k\ge t_0$.}
\end{align}

Further, letting $e\in\mathbb{R}^m$ be the vector with all
components equal to 1, from the definition of $L(k)$ we almost surely have
for all $k\ge t_0$:
\begin{align}\nonumber
\EXP{L(k)\mid\F_k}e
&=\EXP{\diag(\pi(k))e-W^T(k+1)\diag(\pi(k+1))W(k+1)e\mid\F_k}\cr
&=\pi(k)-\EXP{W^T(k+1)\pi(k+1)\mid \F_k}=0,
\end{align}
which holds since $W(k)$ is stochastic almost surely and
$\pc$ is an absolute probability process for $\Wc$.
Thus, the random matrix $\EXP{L(k)\mid\F_k}$ is symmetric and
$\EXP{L(k)\mid\F_k}e=0$ almost surely.
It can be shown that for a symmetric matrix $A$ with $Ae=0$,
we have $x^TAx=-\sum_{i<j}A_{ij}(x_i-x_j)^2$. Then, it follows that
almost surely
\[x^T(k)\EXP{L(k)\mid\F_k}x(k)=-\sum_{i<j}H_{ij}(k)(x_i(k)-x_j(k))^2,\]
where $H(k)=\EXP{W^T(k+1)\diag(\pi(k+1))W(k+1)\mid \F_k}$.
Using this relation in inequality~\eqref{eqn:infflowstable:Delta}, we conclude
that almost surely
\begin{align}\label{eqn:jed}
\EXP{V_{\pi}(x(k+1),k+1)\mid \F_k}
\leq V_{\pi}(x(k),k)-\sum_{i<j}H_{ij}(k)(x_i(k)-x_j(k))^2
\quad\mbox{for all $k\ge t_0$}.\end{align}

In the proof of inequality~\eqref{eqn:jed}, the inequality sign appears due to
relation~\eqref{eqn:infflowstable:Delta} only.
If $\pi^T(k+1)W(k+1)=\pi^T(k)W(k)$ almost surely, then we have $\pi^T(k+1)x(k+1)=\pi^T(k)x(k)$ almost surely. Thus,
relation~\eqref{eqn:infflowstable:Delta} holds as an equality and, consequently,
so does relation~\eqref{eqn:jed}.
\end{proof}

One of the important implications of
Theorem~\ref{thrm:quaddecrease} is the following result.
\begin{corollary}\label{cor:infflowstable:sumfinite}
Let $\Wc$ be an adapted random chain that admits
an absolute probability process $\pc$.
Then, for any random dynamics $\xc$ driven by $\Wc$,
we have for all $t_0\geq 0$,
\[\EXP{\sum_{k=t_0}^{\infty}\sum_{i<j}L_{ij}(k)\left(x_i(k)-x_j(k)\right)^2}
\leq \EXP{V_{\pi}(x(t_0),t_0)}<\infty,\]
where $L(k)=W^T(k+1)\diag(\pi(k+1))W(k+1)$.
\end{corollary}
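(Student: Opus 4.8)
The plan is to upgrade the one-step conditional estimate of Theorem~\ref{thrm:quaddecrease} into a statement about the full infinite sum by taking expectations, telescoping, and applying monotone convergence. First I would take the total expectation of the inequality in Theorem~\ref{thrm:quaddecrease}. By the tower property, $\EXP{\EXP{V_{\pi}(x(k+1),k+1)\mid\F_k}}=\EXP{V_{\pi}(x(k+1),k+1)}$, so
\[\EXP{V_{\pi}(x(k+1),k+1)}\leq \EXP{V_{\pi}(x(k),k)}-\EXP{\sum_{i<j}H_{ij}(k)(x_i(k)-x_j(k))^2}.\]
The essential bookkeeping step is to replace $H_{ij}(k)$ by $L_{ij}(k)$ under the expectation. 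Since $H(k)=\EXP{L(k)\mid\F_k}$ and each factor $(x_i(k)-x_j(k))^2$ is $\F_k$-measurable, I can pull the square inside the conditional expectation and apply the tower property once more to obtain $\EXP{H_{ij}(k)(x_i(k)-x_j(k))^2}=\EXP{L_{ij}(k)(x_i(k)-x_j(k))^2}$, which yields
\[\EXP{\sum_{i<j}L_{ij}(k)(x_i(k)-x_j(k))^2}\leq \EXP{V_{\pi}(x(k),k)}-\EXP{V_{\pi}(x(k+1),k+1)}.\]

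Next I would sum this telescoping inequality over $k$ from $t_0$ to $N$. Because $V_{\pi}\geq 0$ almost surely (noted after Theorem~\ref{thrm:compairsonfunctions}) and $\{V_{\pi}(x(k),k)\}$ is a super-martingale, each $\EXP{V_{\pi}(x(k),k)}$ is finite, so the subtraction is legitimate and the right-hand side telescopes to
\[\sum_{k=t_0}^N\EXP{\sum_{i<j}L_{ij}(k)(x_i(k)-x_j(k))^2}\leq \EXP{V_{\pi}(x(t_0),t_0)}-\EXP{V_{\pi}(x(N+1),N+1)}\leq \EXP{V_{\pi}(x(t_0),t_0)},\]
where the last step discards the nonnegative term $\EXP{V_{\pi}(x(N+1),N+1)}$. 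To pass to $N\to\infty$ and pull the limit inside the expectation I would invoke the monotone convergence theorem. This is justified because every summand is nonnegative: the off-diagonal entries expand as $L_{ij}(k)=\sum_{\ell}\pi_\ell(k+1)W_{\ell i}(k+1)W_{\ell j}(k+1)\geq 0$, being sums of products of nonnegative quantities, and $(x_i(k)-x_j(k))^2\geq 0$.

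Finally I would verify the right-hand finiteness. Since $x(t_0)=v$ is a deterministic constant vector and $\pi(t_0)$ is almost surely stochastic, $V_{\pi}(x(t_0),t_0)=\sum_i\pi_i(t_0)v_i^2-(\pi^T(t_0)v)^2\leq \max_i v_i^2$ almost surely, a deterministic bound, so $\EXP{V_{\pi}(x(t_0),t_0)}<\infty$. I do not anticipate a serious obstacle here, since the analytic work is already contained in Theorem~\ref{thrm:quaddecrease}; the only point requiring genuine care is the identification of $H_{ij}$ with $L_{ij}$ inside the expectation via $\F_k$-measurability, after which the argument is just telescoping together with monotone convergence.
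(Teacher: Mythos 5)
Your proposal is correct and follows essentially the same route as the paper's proof: take total expectations in Theorem~\ref{thrm:quaddecrease}, use $\F_k$-measurability of $(x_i(k)-x_j(k))^2$ together with the tower property to replace $H_{ij}(k)$ by $L_{ij}(k)$, and telescope. You merely make explicit two steps the paper leaves implicit (the monotone convergence passage to the infinite sum and the finiteness of $\EXP{V_{\pi}(x(t_0),t_0)}$), which is fine.
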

\begin{proof}
By taking expectation on both sides of the relation in
Theorem~\ref{thrm:quaddecrease}, %\eqref{eqn:infflowstable:quaddecrease},
we obtain for all $k\ge t_0$:
\begin{align}\label{eqn:infflowstbale:appexpect}
\EXP{V_{\pi}(x(k+1),k+1)}\leq \EXP{V_{\pi}(x(k),k)}-\EXP{\sum_{i<j}
\EXP{L_{ij}(k)\mid \F_k}(x_i(k)-x_j(k))^2}.
\end{align}
Since $x(k)$ is measurable with respect to $\F_k$, it follows that
\[\EXP{L_{ij}(k)\mid \F_k}(x_i(k)-x_j(k))^2
=\EXP{L_{ij}(k)(x_i(k)-x_j(k))^2\mid \F_k}=\EXP{L_{ij}(k)(x_i(k)-x_j(k))^2}.\]
Using this relation in Eq.~\eqref{eqn:infflowstbale:appexpect}, we see that
for all $k\ge t_0$:
\begin{align}\nonumber
\EXP{V_{\pi}(x(k+1),k+1)}\leq \EXP{V_{\pi}(x(k),k)}
-\EXP{\sum_{i<j}L_{ij}(k)(x_i(k)-x_j(k))^2}.
\end{align}
Hence, $\sum_{k=t_0}^\infty\EXP{\sum_{i<j}L_{ij}(k)(x_i(k)-x_j(k))^2}
\leq \EXP{V_\pi(x(t_0),t_0)}$ for any $t_0\geq 0$.
\end{proof}

\section{Class $\Pstar$}\label{sec:infflowstable:Pstar}
In this section, we introduce a class of random chains, which we refer to as
the \textit{class $\Pstar$}, and we prove one of the central results of this work.
In particular, we show that the claim of Theorem~\ref{thrm:mainresult} holds
for any chain that is in the class $\Pstar$ and
satisfies some form of aperiodicity.

\begin{definition}\label{def:infflowstable:Pstar}
The class $\Pstar$ is the class of random adapted chains that admit
an absolute probability process $\pc$ which is uniformly bounded away
from zero almost surely, i.e., $\pi_i(k)\geq p^*$ almost surely
for some scalar $p^*>0$, and for all $k\geq 0$ and all $i\in[m]$.
We write this concisely as $\pc\geq p^*>0$.
\end{definition}
It may appear that the definition of the class $\Pstar$ is a rather
restrictive. Later on, we show that in fact the class $\Pstar$
contains a broad family of deterministic and random chains.

To establish the main result of this section, we make use of the following
intermediate result.

\begin{lemma}\label{lemma:infflowstable:suminfinity}
Let $\Ac$ be a deterministic chain with the infinite flow graph
$\Ginf=([m],\Einf)$. Let $(t_0,v)\in\Rmo$ be an initial point for the dynamics
driven by $\Ac$. If
\[\lim_{k\to\infty}\left(x_{i_0}(k)-x_{j_0}(k)\right)\not=0,\]
 for some $i_0,j_0$ belonging to the same connected component of $\Ginf$,
then we have
\[\sum_{k=t_0}^{\infty}\sum_{i<j}[(A_{ij}(k+1)+A_{ji}(k+1))(x_i(k)-x_j(k))^2]
=\infty.\]
\end{lemma}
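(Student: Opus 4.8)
The plan is to prove the contrapositive: assuming the flow-weighted disagreement sum
\[
\Sigma \;=\; \sum_{k=t_0}^{\infty}\sum_{i<j}\bigl(A_{ij}(k+1)+A_{ji}(k+1)\bigr)\bigl(x_i(k)-x_j(k)\bigr)^2
\]
is finite, I would show that $\lim_{k\to\infty}\bigl(x_{i_0}(k)-x_{j_0}(k)\bigr)=0$ for every pair $i_0,j_0$ lying in one connected component of $\Ginf$. Write $d_{ij}(k)=x_i(k)-x_j(k)$ and $w_{ij}(k)=A_{ij}(k+1)+A_{ji}(k+1)$. Two elementary facts drive the argument. First, since each $A(k)$ is row-stochastic, every $x_i(k+1)$ is a convex combination of the $x_\ell(k)$, so $\max_i x_i(k)$ is non-increasing, $\min_i x_i(k)$ is non-decreasing, the trajectory stays in the bounded box $[\min_\ell v_\ell,\max_\ell v_\ell]$, and in particular each $|d_{ij}(k)|$ is uniformly bounded. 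Second, for a pair $\{i,j\}$ that is an edge of $\Ginf$ we have $\sum_k w_{ij}(k)=\infty$; combined with $\sum_k w_{ij}(k)d_{ij}(k)^2<\infty$ this forces the $w_{ij}$-weight to concentrate on the times where the disagreement is small: for every $\epsilon>0$, $\sum_{k:\,|d_{ij}(k)|\ge\epsilon}w_{ij}(k)\le\epsilon^{-2}\Sigma<\infty$, while $\sum_{k:\,|d_{ij}(k)|<\epsilon}w_{ij}(k)=\infty$. In particular $\liminf_{k}|d_{ij}(k)|=0$ for every edge $\{i,j\}$ (pairs that are not edges contribute only finitely to $\Sigma$ and may be dropped).

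Next I would localize to a single connected component $S$. Because the total cross-component flow $\sum_k\sum_{i\in S,\,j\in\bar S}(A_{ij}(k)+A_{ji}(k))$ is finite by the very definition of the infinite flow graph, I would replace $\Ac$ by the block-diagonal chain $\Bc$ obtained by zeroing out the inter-component entries and folding them into the diagonal, exactly as in the construction in the proof of Lemma~\ref{lemma:random:extensionthrm1}; this $\Bc$ is an $\ell_1$-approximation of $\Ac$, so by Lemma~1 of \cite{TouriNedich:Approx} the corresponding trajectories stay asymptotically together and it suffices to prove consensus on $S$ for $\Bc$. The payoff of this reduction is that the restriction of $\Bc$ to $S$ is a stand-alone stochastic chain, so the restricted spread $D_S(k)=\max_{i\in S}x_i(k)-\min_{i\in S}x_i(k)$ is itself non-increasing and hence converges to some $D_S^\infty\ge 0$; the goal becomes $D_S^\infty=0$, and since $D_S$ is monotone it is enough to exhibit $\liminf_k D_S(k)=0$.

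The crux --- and the step I expect to be the main obstacle --- is upgrading the edgewise $\liminf|d_{ij}|=0$ to $\liminf D_S=0$, i.e.\ producing times at which all disagreements across $S$ are simultaneously small. This cannot be obtained from the summability bookkeeping alone: a single instant can have small disagreement on the currently high-flow edges while a large disagreement hides on a momentarily low-flow edge, so a naive pigeonhole over the finitely many spanning-tree edges fails, and the averaging dynamics must genuinely be used. I would combine the monotonicity of $D_S$ with the contraction estimate $|d_{ij}(k+1)|\le\bigl(1-\sum_{\ell\in S}\min(B_{i\ell}(k+1),B_{j\ell}(k+1))\bigr)D_S(k)$, which generalizes the two-node identity $|d_{12}(k+1)|=(1-w_{12}(k+1))|d_{12}(k)|$: in the two-node case $\sum_k w_{12}(k)=\infty$ already forces $\prod_k(1-w_{12}(k))=0$ and hence $d_{12}(k)\to0$. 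For general $S$ the mechanism is that $\Sigma<\infty$ rules out the persistent ``swap-like'' flow that would keep endpoints disagreeing while flowing; consequently, over suitably chosen blocks of time, the accumulated flow across every cut of $S$ (positive by connectedness and the infinite-flow property) actually scrambles the block product, making its Dobrushin-type contraction factor bounded away from $1$ on infinitely many blocks, whence monotone $D_S$ yields $D_S^\infty=0$. Finally, chaining $|d_{i_0 j_0}(k)|\le D_S(k)\to0$ along a path in $S$ and undoing the $\ell_1$-approximation transfers the conclusion back to $\Ac$, completing the contrapositive.
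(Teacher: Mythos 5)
Your opening moves are sound: the boundedness of the trajectory, the deduction that $\sum_k w_{ij}(k)=\infty$ together with $\sum_k w_{ij}(k)d_{ij}(k)^2<\infty$ forces $\liminf_k|d_{ij}(k)|=0$ on every edge of $\Ginf$, the localization to a component via the block-diagonal $\ell_1$-approximation (as in the proof of Lemma~\ref{lemma:random:extensionthrm1}), and the monotonicity of the restricted spread $D_S$. You also correctly identified where the real difficulty lies. But the mechanism you propose for that crux step does not work, and it cannot be repaired in the form you state it. You claim that $\Sigma<\infty$ ``rules out persistent swap-like flow'' and hence makes the Dobrushin contraction factor of suitable block products bounded away from $1$ infinitely often. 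This is false, because $\Sigma$ is a quantity attached to one trajectory, not to the chain: it can be finite for reasons that say nothing about the matrices. Concretely, take $A(1)=\frac{1}{m}ee^T$ and let $A(k)$ for $k\ge 2$ be permutation matrices whose cumulative flow crosses every cut infinitely often. For any initial point, $x(k)$ is a consensus vector from time $1$ onward, so $\Sigma<\infty$ and the contrapositive conclusion holds trivially; yet no block product of the permutation tail is ever scrambling, so the intermediate statement you rely on fails. (Even more simply, a constant initial vector $v=ce$ gives $\Sigma=0$ for an arbitrary chain.) Any valid argument must therefore work with the trajectory itself rather than attempt to extract a contraction property of the matrix products from $\Sigma<\infty$.

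This is exactly what the paper's (direct, not contrapositive) proof does, and it is the ingredient your sketch is missing. Assuming $\limsup_k\bigl(x_{i_0}(k)-x_{j_0}(k)\bigr)=\al>0$, one first chooses $K$ so large that the residual cross-component flow satisfies $\sum_{k\ge K}A_S(k+1)\le\frac{\al}{32q}$ (no $\ell_1$-approximation is needed), then picks $t_1\ge K$ with $x_{i_0}(t_1)-x_{j_0}(t_1)\ge\frac{\al}{2}$ and sorts the values of the component $S$ at time $t_1$: some adjacent pair in the sorted order is separated by at least $\frac{\al}{2q}$, which yields a cut of $S$ with a gap of that size. Because $S$ is a connected component of $\Ginf$, the flow across this cut is infinite, so there is a finite first time $T_1$ at which the flow accumulated since $t_1$ across the cut reaches $\frac{\al}{32q}$; up to that time the total flow into and out of the upper half of the cut is small, so (by Lemma~1 of the paper's reference on ergodicity) the gap can shrink to no less than $\frac{\al}{4q}$ on all of $[t_1,T_1]$. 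Hence the interval $[t_1,T_1]$ contributes at least $\beta=\left(\frac{\al}{4q}\right)^2\frac{\al}{32q}>0$ to $\Sigma$, and since the limsup assumption lets one restart this construction after $T_1$, one obtains infinitely many disjoint intervals each contributing $\beta$, whence $\Sigma=\infty$. The persistence-of-a-gap argument across a cut chosen from the sorted trajectory values is the idea your proposal lacks; without it, the passage from edgewise $\liminf|d_{ij}|=0$ to $D_S\to 0$ remains unproven.
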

\begin{proof}
Let $i_0$ and $j_0$ be in the same connected component of $\Ginf$ and such that
\[\limsup_{k\to\infty}\left(x_{i_0}(k)-x_{j_0}(k)\right)=\al>0.\]
Without loss of generality we may assume that $x(t_0)\in[-1,1]^m$, for
otherwise we can consider the dynamics started at
$y(t_0)=\frac{1}{\|x(t_0)\|_{\infty}}x(t_0)$. Let $S$ be the vertex set
of the connected component in $\Ginf$ containing
$i_0,j_0$, and without loss of generality assume that $S=\{1,2,\ldots,q\}$
for some $q\in[m]$, $q\ge 2$.
Then, by the definition of the infinite flow graph, there exists a large enough
$K\geq t_0$ such that
\[\sum_{k=K}^{\infty}A_{S}(k+1)\leq \frac{\al}{32q},\]
where $A_{S}(k+1)=A_{S\bar{S}}(k+1)+A_{\bar{S}S}(k+1)$.
Furthermore, since $\limsup_{k\to\infty}\left(x_{i_0}(k)-x_{j_0}(k)\right)
=\al>0$, there exists a time instance $t_1\geq K$ such that
$x_{i_0}(t_1)-x_{j_0}(t_1)\geq\frac{\al}{2}$.

Let $\sigma:[q]\to[q]$ be a permutation such that
$x_{\sigma(1)}(t_1)\geq x_{\sigma(2)}(t_1)\geq \cdots
\geq x_{\sigma(q)}(t_1)$, i.e.\ $\sigma$ is an ordering of
$\{x_i(t_1)\mid i\in[q]\}$. Since $x_{i_0}(t_1)-x_{j_0}(t_1)
\geq \frac{\al}{2}$, it follows that $x_{\sigma(1)}(t_1)-x_{\sigma(q)}(t_1)
\geq \frac{\al}{2}$ and, therefore, there exists $\ell\in[q]$ such that
$x_{\sigma(\ell)}(t_1)-x_{\sigma(\ell+1)}(t_1)\geq \frac{\al}{2q}$. Let
\[T_1=\arg\min_{t>t_1}\sum_{k=t_1}^{t}\sum_{\substack{i,j\in [q]\\i
\leq \ell,\ell+1\leq j}}(A_{\sigma(i)\sigma(j)}(k+1)
+A_{\sigma(j)\sigma(i)}(k+1))\geq \frac{\al}{32q}.\]
Since $S$ is a connected component of the infinite flow graph $\Ginf$,
we must have $T_1<\infty$; otherwise, $S$ could be decomposed into
two disconnected components $\{\sigma(1),\ldots,\sigma(l)\}$ and
$\{\sigma(l+1),\ldots,\sigma(q)\}$.

Now, let $R=\{\sigma(1),\ldots,\sigma(l)\}$.
We have for any $k\in [t_1,T_1]$:
\begin{align}\nonumber
\sum_{k=t_1}^{T_1-1}A_{R}(k+1)
&=\sum_{k=t_1}^{T_1-1}\left(\sum_{\substack{i,j\in [q]\\i\
leq \ell,\ell+1\leq j}}(A_{\sigma(i)\sigma(j)}(k+1)
+A_{\sigma(j)\sigma(i)}(k+1))\right.\cr
&\qquad\left.+\sum_{i\leq \ell,j\in \bar{S}}A_{\sigma(i)j}(k+1)
+\sum_{i\in \bar{S},j\leq l}A_{i\sigma(j)}(k+1)\right)\cr
&\leq
\sum_{k=t_1}^{T_1-1}\sum_{\substack{i,j\in [q]\\i
\leq \ell,\ell+1\leq j}}(A_{\sigma(i)\sigma(j)}(k+1)
+A_{\sigma(j)\sigma(i)}(k+1))+\sum_{k=K}^{\infty}A_{S}(k+1)
\leq \frac{\al}{16q},
\end{align}
which follows by the definition of $T_1$ and the choice of $t_1\geq K$.
By Lemma~1 in~\cite{ErgodicityPaper}, it follows that
for $k\in [t_1,T_1]$,
\[\max_{i\in R}x_i(k)\leq \max_{i\in R}x_i(t_1)+2\frac{\al}{16q},
\quad\quad
%. Similarly, we have
\min_{i\in S\setminus R}x_i(k)
\geq \min_{i\in S\setminus R}x_i(t_1)-2\frac{\al}{16q}.\]
Thus, for any $i,j\in[q]$ with $i\leq l$ and $j\geq l+1$, and for any
$k\in [t_1,T_1]$, we have
 \[x_{\sigma(i)}(k)-x_{\sigma(j)}(k)
\geq 2 \left(2\frac{\al}{16q}\right)=\frac{\al}{4q}.\]
 Therefore,
\begin{align}\nonumber
&\sum_{k=t_1}^{T_1}\sum_{\substack{i,j\in[q]\\ i
\leq \ell,\ell+1\leq j}}(A_{\sigma(i)\sigma(j)}(k+1)
+A_{\sigma(j)\sigma(i)}(k+1))(x_{\sigma(i)}(k)-x_{\sigma(j)}(k))^2\cr
&\qquad\geq
(\frac{\al}{4q})^2\sum_{k=t_0}^{T_1}\sum_{\substack{i,j\in[q]\\ i\leq l,j
\geq l+1}}(A_{\sigma(i)\sigma(j)}(k+1)+A_{\sigma(j)\sigma(i)}(k+1))
\geq \left(\frac{\al}{4q}\right)^2\frac{\al}{32q}=\beta>0.
\end{align}
Further, it follows that:
\begin{align}\nonumber
&\sum_{k=t_1}^{T_1}\sum_{i<j}(A_{ij}(k+1)+A_{ji}(k+1))(x_i(k)-x_j(k))^2\cr
&\qquad \geq
\sum_{k=t_1}^{T_1}\sum_{\substack{i,j\in[q]\\ i
\leq \ell,\ell+1\leq j}}(A_{\sigma(i)\sigma(j)}(k+1)
+A_{\sigma(j)\sigma(i)}(k+1))(x_{\sigma(i)}(k)-x_{\sigma(j)}(k))^2\geq\beta.
\end{align}

Since $\limsup_{k\to\infty}\left(x_{i_0}(k)-x_{j_0}(k)\right)=\al>0$, there
exists a time $t_2>T_1$ such that $x_{i_0}(t_2)-x_{j_0}(t_2)
\geq \frac{\al}{2}$. Then, using the above argument, there exists $T_2>t_2$
such that $\sum_{k=t_2}^{T_2}\sum_{i<j}(A_{ij}(k+1)
+A_{ji}(k+1))(x_i(k)-x_j(k))^2\geq\beta$. Hence, using the induction,
we can find time instances
\[\cdots>T_{\xi+1}>t_{\xi+1}>T_\xi>t_\xi>T_{\xi-1}>t_{\xi-1}>\cdots>T_1>t_1
\ge t_0,\]
such that $\sum_{k=t_\xi}^{T_\xi}\sum_{i<j}(A_{ij}(k+1)
+A_{ji}(k+1))(x_i(k)-x_j(k))^2\geq \beta$ for any $\xi\geq 1$.
The intervals $[t_{\xi},T_\xi]$ are non-overlapping subintervals of
$[t_0,\infty)$, implying that
\[\sum_{k=t_0}^{\infty}\sum_{i<j}(A_{ij}(k+1)+A_{ji}(k+1))(x_i(k)-x_j(k))^2
=\infty.\]
\end{proof}

For our main result,
let us define the weak aperiodicity for an adapted random chain.
 \begin{definition}
   We say that an adapted random chain $\Wc$ is weakly aperiodic if
for some $\gamma>0$, and for all distinct $i,j\in[m]$ and all $k\ge0$,
   \[\EXP{W^{iT}(k+1)W^j(k+1)\mid \F_k}\geq \gamma
\EXP{W_{ij}(k+1)+W_{ji}(k+1)\mid \F_k}.\]
 \end{definition}
 Now, we establish the main result of this section.
\begin{theorem}\label{thrm:infflowstability:infiniteflowstability}
Let $\Wc\in\Pstar$ be an adapted chain that is weakly aperiodic.
Then, $\lim_{k\to\infty}W(k:t_0)=W(\infty:t_0)$ exists almost surely
for any $t_0\geq 0$. Moreover, the event under which
$W_i(\infty:t_0)=W_j(\infty:t_0)$ for all $t_0\geq 0$ is almost surely equal
to the event that $i,j$ are belonging to the same connected component of
the infinite flow graph of $\Wc$.
\end{theorem}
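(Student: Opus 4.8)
The plan is to prove the theorem in two logically separate parts: first the almost-sure existence of the limit $W(\infty:t_0)$, and then the precise characterization of the indices $i,j$ that get merged (i.e.\ satisfy $W_i(\infty:t_0)=W_j(\infty:t_0)$) in terms of the connected components of the infinite flow graph $\Ginf$. Throughout, I would work with the dynamics $\xc$ driven by $\Wc$ and translate statements about the products $W(k:t_0)$ into statements about $\xc$, as sanctioned by the observations in Section~\ref{sec:dynamics}; in particular it suffices to analyze initial points of the form $(t_0,e_\ell)$.

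For the \emph{existence} of the limit, the key tool is Corollary~\ref{cor:infflowstable:sumfinite}. Since $\Wc\in\Pstar$, the absolute probability process satisfies $\pc\ge p^*>0$ almost surely, and since $\Wc$ is weakly aperiodic, the off-diagonal entries of $L(k)=W^T(k+1)\diag(\pi(k+1))W(k+1)$ can be bounded below: I would show $L_{ij}(k)\ge p^*\,W^{iT}(k+1)W^j(k+1)$ and then invoke weak aperiodicity in conditional-expectation form to dominate $\EXP{W_{ij}(k+1)+W_{ji}(k+1)\mid\F_k}$. Combined with Corollary~\ref{cor:infflowstable:sumfinite}, this yields
\[
\EXP{\sum_{k=t_0}^{\infty}\sum_{i<j}\big(\EXP{W_{ij}(k{+}1)+W_{ji}(k{+}1)\mid\F_k}\big)(x_i(k)-x_j(k))^2}<\infty ,
\]
so the inner sum is almost surely finite. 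The finiteness of this weighted sum, together with the deterministic-pathwise contrapositive of Lemma~\ref{lemma:infflowstable:suminfinity}, forces $\lim_{k\to\infty}(x_i(k)-x_j(k))=0$ almost surely for every pair $i,j$ in the same connected component of $\Ginf$. Applying Lemma~\ref{lemma:random:extensionthrm1} on each such component then promotes mutual ergodicity to ergodicity, so $\lim_{k\to\infty}x_i(k)$ exists almost surely for every $i$; running this over $\ell\in[m]$ gives convergence of $W(k:t_0)$ to some stochastic $W(\infty:t_0)$, and the limit is stochastic as an almost-sure limit of stochastic matrices.

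For the \emph{characterization}, one inclusion is immediate: if $i,j$ lie in the same component of $\Ginf$, the argument just given shows $W_i(\infty:t_0)=W_j(\infty:t_0)$ almost surely. For the reverse inclusion I would apply Lemma~2 of \cite{TouriNedich:Approx} (quoted as the first unnumbered Lemma in the excerpt): if $i,j$ are mutually ergodic then they must belong to the same connected component of $\Ginf$. The subtlety is that $\Ginf$ is a \emph{random} graph; here I would use the fact, noted after the definition of $\Ginf$, that by Kolmogorov's $0$–$1$ law the infinite flow graph of an independent chain is almost surely a fixed deterministic graph, so these pathwise component statements are meaningful and the two events coincide up to a null set.

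The main obstacle I anticipate is the careful bridging between the \emph{random/conditional} weighted-sum bound coming from Corollary~\ref{cor:infflowstable:sumfinite} and the \emph{deterministic pathwise} hypothesis required by Lemma~\ref{lemma:infflowstable:suminfinity}: Lemma~\ref{lemma:infflowstable:suminfinity} is stated for a deterministic chain $\Ac$ and uses the unconditional weights $A_{ij}(k{+}1)+A_{ji}(k{+}1)$, whereas my summable quantity involves the conditional expectations $\EXP{W_{ij}+W_{ji}\mid\F_k}$. I would handle this by fixing a generic sample path $\omega$ (on which $\pc\ge p^*$ holds and the relevant sum is finite) and arguing that, along that path, almost-sure finiteness of the \emph{conditional} weighted sum transfers to finiteness of the realized weighted sum $\sum_k\sum_{i<j}(W_{ij}(k{+}1,\omega)+W_{ji}(k{+}1,\omega))(x_i(k,\omega)-x_j(k,\omega))^2$, so that Lemma~\ref{lemma:infflowstable:suminfinity} applies pathwise to the deterministic realization $\{W(k,\omega)\}$. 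Making this measure-theoretic transfer rigorous—ensuring the exceptional null sets for the different index pairs and the $0$–$1$ law all align—is the delicate point; once it is in place, the remaining steps are direct applications of the lemmas already established.
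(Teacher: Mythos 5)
Your proposal follows the paper's own proof essentially step for step: the lower bound on $H_{ij}(k+1)$ obtained from the $\Pstar$ property together with weak aperiodicity, Corollary~\ref{cor:infflowstable:sumfinite} for summability of the weighted sum, Lemma~\ref{lemma:infflowstable:suminfinity} applied pathwise, Lemma~\ref{lemma:random:extensionthrm1} to upgrade mutual ergodicity to ergodicity on each component, the reduction to initial points $(t_0,e_\ell)$, and Lemma~2 of \cite{TouriNedich:Approx} for the converse inclusion (which the paper leaves implicit in ``the assertion follows'').

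The one place you depart from the paper is precisely the ``delicate point'' you flag, and there your proposed mechanism is not the right one: along a fixed sample path there is no implication from finiteness of the conditional weighted sum $\sum_k\EXP{W_{ij}(k+1)+W_{ji}(k+1)\mid\F_k}(x_i(k)-x_j(k))^2$ to finiteness of the realized sum $\sum_k (W_{ij}(k+1,\omega)+W_{ji}(k+1,\omega))(x_i(k,\omega)-x_j(k,\omega))^2$; conditional expectations and realizations are simply not comparable path by path. The correct (and simpler) route, which is the one the paper takes, is never to carry the conditional weights into an almost-sure statement at all: since $x(k)$ is $\F_k$-measurable, the tower property gives
\[
\EXP{\EXP{W_{ij}(k+1)+W_{ji}(k+1)\mid \F_k}\,(x_i(k)-x_j(k))^2}
=\EXP{(W_{ij}(k+1)+W_{ji}(k+1))\,(x_i(k)-x_j(k))^2},
\]
so the expectation bound you already displayed is, term by term, a bound on the expectation of the \emph{realized} weighted sum; that expectation is therefore finite, the realized sum is almost surely finite, and Lemma~\ref{lemma:infflowstable:suminfinity} applies directly to each realization $\{W(k,\omega)\}$ with its own infinite flow graph $\Ginf(\omega)$, with no transfer argument needed. (Alternatively, an extended Borel--Cantelli lemma for bounded nonnegative adapted sequences would give the almost-sure---not pathwise---equivalence of the two finiteness events, but it is unnecessary here.) A second, smaller correction: your appeal to Kolmogorov's 0--1 law is neither available nor needed, since the theorem is stated for adapted chains, not independent ones; for such chains $\Ginf$ is a genuinely random graph, and the theorem's conclusion is an almost-sure equality of two random events, which is exactly what the pathwise applications of Lemma~\ref{lemma:infflowstable:suminfinity} and of Lemma~2 of \cite{TouriNedich:Approx} deliver without any determinization of $\Ginf$.
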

\begin{proof}
Since $\Wc$ is in $\Pstar$, $\Wc$ admits an absolute probability process $\pc$
such that $\pc\geq p^*>0$ almost surely. Thus, it follows that
\[p^{*}\EXP{W^T(k+1)W(k+1)\mid \F_k}\leq
\EXP{W^T(k+1)\diag(\pi(k+1))W(k+1)\mid \F_k}=H(k+1).\]
On the other hand, by the weak aperiodicity, we have
\[\gamma \EXP{W_{ij}(k+1)+W_{ji}(k+1)\mid \F_k}
\leq \EXP{W^{iT}(k+1)W^j(k+1)\mid \F_k},\]
for some $\gamma\in (0,1]$ and for all distinct $i,j\in [m]$. Thus, we have
$p^*\gamma\EXP{W_{ij}(k+1)+W_{ji}(k+1)\mid \F_k}\leq H_{ij}(k+1)$. By
Corollary~\ref{cor:infflowstable:sumfinite},
for the random dynamics $\xc$ driven by $\Wc$ and started at arbitrary
$(t_0,v)\in\Rmo$, it follows that
\[p^*\gamma\sum_{k=t_0}^\infty\EXP{\sum_{i<j}(W_{ij}(k)
+W_{ji}(k))(x_i(k)-x_j(k))^2}\leq \EXP{V_\pi(x(t_0),t_0)}.\]
As a consequence,
\[\sum_{k=t_0}^\infty\sum_{i<j}(W_{ij}(k)+W_{ji}(k))(x_i(k)-x_j(k))^2
<\infty\qquad\mbox{almost surely}.\]
Therefore, by Lemma~\ref{lemma:infflowstable:suminfinity}, we conclude that
$\lim_{k\to\infty}\left(x_i(k,\omega)-x_j(k,\omega)\right)=0$ for any $i,j$
belonging to the same connected component of $\Ginf(\omega)$, for almost all
$\omega\in \Omega$. By Lemma~\ref{lemma:random:extensionthrm1} it follows
that every index $i\in[m]$ is ergodic for almost all $\omega \in \Omega$.
By considering the initial conditions $(t_0,e_\ell)\in\Rmo$ for all
$\ell\in[m]$, the assertion follows.
\end{proof}

Theorem~\ref{thrm:infflowstability:infiniteflowstability}
shows that the dynamics in~\eqref{eqn:dynsys} is convergent
almost surely for aperiodic chains $\Wc\in \Pstar$. Moreover, the theorem
also characterizes the limiting points of such a dynamics as well
as the limit matrices of the products $W(k:t_0)$ as $k\to\infty$.

\section{Balanced Chains}\label{sec:infflowstable:balanced}
In this section, we characterize a subclass of $\Pstar$ chains,
namely the class of {\it strongly aperiodic balanced chains}.
We first show that this class includes many of the chains that have been
studied in the existing literature. Then, we prove that any aperiodic
balanced chain belongs to the class $\Pstar$. We also show
that a balanced independent random chain is strongly aperiodic, thus
concluding Theorem~\ref{thrm:mainresult}.

Before continuing our analysis on balanced chains,
let us discuss some of the well-known subclasses of such chains:
 \begin{enumerate}
   \item \textbf{Balanced Bidirectional Chains}:
We say that an independent chain $\Wc$ is a balanced bidirectional chain
if there exists some $\al>0$ such that
$\EXP{W_{ij}(k)}\geq \al \EXP{W_{ji}(k)}$ for all $k\geq 1$ and $i,j\in [m]$.
These chains are in fact balanced, since for any $S\subset[m]$ we have:
\[\EXP{W_{S\bar{S}}(k)}=\EXP{\sum_{i\in S,j\in\bar{S}}W_{ij}(k)}
\geq \EXP{\sum_{i\in S,j\in\bar{S}}\al W_{ji}(k)}=\al \EXP{W_{\bar{S}S}(k)}.\]

Examples of such chains are bounded bidirectional deterministic chains,
which are the chains such that $A_{ij}(k)>0$ implies
$A_{ji}(k)>0$ for all $i.j\in[m]$ and all $k\ge1$,
and the positive entries are uniformly bounded from below
by some $\gamma>0$ (i.e., $A_{ij}(k)>0$ implies
$A_{ij}(k)\geq \gamma$ for all $i,j\in[m]$ and all $k\ge1$).
In this case, for $A_{ij}(k)>0$, we have $A_{ij}(k)\geq \gamma
\geq \gamma A_{ji}(k)$ and for $A_{ij}(k)=0$, we have $A_{ji}(k)=0$ and,
hence, in either of the cases $A_{ij}(k)\geq \gamma A_{ji}(k)$.
Therefore, bounded bidirectional chains are examples of balanced
bidirectional chains. Such chains have been considered
in~\cite{Lorenz,multiagent,CaoMora} and, among others, include the Hegselman-Krause model for opinion dynamics~\cite{Krause:97,Krause:02}.
    \item \textbf{Chains with Common Steady State $\pi>0$}:
This ensemble consists of independent random chains $\Wc$
such that $\EXP{\pi^TW(k)}=\EXP{\pi^T(k)}$ for some stochastic vector $\pi>0$
and all $k\geq 1$, which are generalizations of doubly stochastic chains, where
we have $\pi=\frac{1}{m}e$ ($e$ is a vector of ones).
Doubly stochastic chains and the chains with a common steady state
$\pi>0$ have been studied
in~\cite{Olshevsky09a,ErgodicityPaper,TouriNedich:Approx}.
 \end{enumerate}
     To show that a chain with a common steady state $\pi>0$ is a
balanced chain, let us prove the following lemma.
    \begin{lemma}\label{lemma:commonsteadybalance}
    Let $A$ be a stochastic matrix and $\pi>0$ be a stochastic left-eigenvector
of $A$ corresponding to the unit eigenvalue, i.e., $\pi^TA=\pi^T$. Then,
$A_{S\bar{S}}\geq \frac{\pi_{\min}}{\pi_{\max}}A_{\bar{S}S}$ for any
non-trivial $S\subset[m]$, where $\pi_{\max}=\max_{i\in[m]}\pi_i$ and
$\pi_{\min}=\min_{i\in [m]}\pi_i$.
    \end{lemma}
    \begin{proof}
    Let $S\subset[m]$. Since $\pi^TA=\pi^T$, we have
\begin{align}\label{eqn:commonsteady1}
    \sum_{j\in S}\pi_j=\sum_{i\in [m],j\in S}\pi_{i}A_{ij}
=\sum_{i\in S,j\in S}\pi_{i}A_{ij}+\sum_{i\in \bs,j\in S}\pi_{i}A_{ij}.
    \end{align}
    On the other hand, since $A$ is a stochastic matrix,
we have $\pi_i\sum_{j\in[m]}A_{ij}=\pi_i$. Therefore,
    \begin{align}\label{eqn:commonsteady2}
    \sum_{i\in S}\pi_i=\sum_{i\in S}\pi_i\sum_{j\in[m]}A_{ij}
=\sum_{i\in S,j\in S}\pi_iA_{ij}+\sum_{i\in S,j\in \bs}\pi_iA_{ij}.
    \end{align}
    Comparing Eq.~\eqref{eqn:commonsteady1} and Eq.~\eqref{eqn:commonsteady2},
we see that
$\sum_{i\in \bs,j\in S}\pi_{i}A_{ij}=\sum_{i\in S,j\in \bs}\pi_iA_{ij}$.
Therefore,
    \begin{align}\nonumber
    \pi_{\min}A_{\bs S}\leq \sum_{i\in \bs,j\in S}\pi_{i}A_{ij}
=\sum_{i\in S,j\in \bs}\pi_iA_{ij}\leq \pi_{\max}A_{S\bs}.
    \end{align}
    Hence, we have $A_{S\bs}\geq \frac{\pi_{\min}}{\pi_{\max}}A_{\bs S}$
for any non-trivial  $S\subset[m]$.
    \end{proof}

The above lemma  shows that a chain with a common steady state $\pi>0$
is balanced with balancedness coefficient $\al=\frac{\pi_{\min}}{\pi_{\max}}$.
In fact, the lemma yields a much more general result, as provided below.
    \begin{theorem}\label{thrm:infflowstable:generalsteadystate}
    Let $\Wc$ be an independent random chain with a sequence $\pc$ of
stochastic left-eigenvectors for the expected chain corresponding to the unit
eigenvalue, i.e., $\pi^T(k)\EXP{W(k)}=\pi^T(k)$ for all $k\ge 1$.
If $\pc\geq p^*$ for some scalar $p^*>0$,
then $\Wc$ is a balanced chain with a balancedness coefficient
$\al=\frac{p^*}{1-(m-1)p^*}$.
    \end{theorem}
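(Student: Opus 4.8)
The plan is to reduce Theorem~\ref{thrm:infflowstable:generalsteadystate} to the already-proved Lemma~\ref{lemma:commonsteadybalance} by applying it to each individual expected matrix $\EXP{W(k)}$. For each fixed $k\ge 1$, the vector $\pi(k)$ is a stochastic left-eigenvector of the stochastic matrix $\EXP{W(k)}$ corresponding to the unit eigenvalue, which is exactly the hypothesis of Lemma~\ref{lemma:commonsteadybalance}. Applying that lemma to $A=\EXP{W(k)}$ and $\pi=\pi(k)$ gives, for every nontrivial $S\subset[m]$,
\[
\left(\EXP{W(k)}\right)_{S\bar S}\geq \frac{\pi_{\min}(k)}{\pi_{\max}(k)}\left(\EXP{W(k)}\right)_{\bar S S},
\]
where $\pi_{\min}(k)=\min_{i}\pi_i(k)$ and $\pi_{\max}(k)=\max_i\pi_i(k)$. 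Since the expectation and the index-set sum commute, $\left(\EXP{W(k)}\right)_{S\bar S}=\EXP{W_{S\bar S}(k)}$ and likewise for $\bar S S$, so the displayed inequality is precisely the expected-chain balancedness relation \eqref{eqn:balanceddef} with a $k$-dependent coefficient $\pi_{\min}(k)/\pi_{\max}(k)$.

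The remaining work is to convert this $k$-dependent ratio into the single uniform coefficient $\al=\frac{p^*}{1-(m-1)p^*}$. First I would use the lower bound $\pi_i(k)\geq p^*$ from the hypothesis $\pc\geq p^*$ to bound $\pi_{\min}(k)\geq p^*$ from below. For the denominator I would bound $\pi_{\max}(k)$ from above: since $\pi(k)$ is stochastic, $\sum_i\pi_i(k)=1$, and each of the other $m-1$ coordinates is at least $p^*$, we get
\[
\pi_{\max}(k)=1-\sum_{i\neq \argmax}\pi_i(k)\leq 1-(m-1)p^*.
\]
Hence $\frac{\pi_{\min}(k)}{\pi_{\max}(k)}\geq \frac{p^*}{1-(m-1)p^*}=\al$ for every $k$, which yields the claimed uniform balancedness.

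There is essentially one genuine obstacle worth flagging, and it is more a matter of well-definedness than of difficulty: I must confirm that $\al$ is a legitimate (positive, and at most $1$) balancedness coefficient. Positivity requires $1-(m-1)p^*>0$, i.e.\ $p^*<\frac{1}{m-1}$; but this is automatic, because $mp^*\leq\sum_i\pi_i(k)=1$ forces $p^*\leq\frac1m<\frac{1}{m-1}$, so the denominator is strictly positive. One should also observe $\al\leq 1$, consistent with the remark after \eqref{eqn:balanceddef}, which follows since $p^*\leq 1-(m-1)p^*$ is equivalent to $mp^*\le 1$. The rest is the routine verification that expectation commutes with the finite sums defining $W_{S\bar S}$, together with the measurability already guaranteed by the setup, so I would keep that part brief and lean entirely on Lemma~\ref{lemma:commonsteadybalance} for the structural inequality.
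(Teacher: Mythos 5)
Your proof is correct and takes essentially the same route as the paper's: apply Lemma~\ref{lemma:commonsteadybalance} to each expected matrix $\EXP{W(k)}$ with $\pi=\pi(k)$, then pass to the uniform coefficient via $\pi_{\min}(k)\geq p^*$ and $\pi_{\max}(k)\leq 1-(m-1)\pi_{\min}(k)\leq 1-(m-1)p^*$. Your additional check that $1-(m-1)p^*>0$ (since $mp^*\leq 1$) is a small well-definedness point the paper leaves implicit, but it does not change the argument.
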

    \begin{proof}
Since $\pi^T(k)\EXP{W(k)}=\pi^T(k)$ for all $k\ge1$, by
Lemma~\ref{lemma:commonsteadybalance} we have
\[\EXP{W_{S\bar{S}}(k)}\geq
\frac{\pi_{\min}(k)}{\pi_{\max}(k)}\EXP{W_{\bar{S}S}(k)}
\qquad\hbox{for any non-trivial $S\subset[m]$ and all $k\geq 1$},\]
By $\pc\geq p^*>0$, it follows that$\pi_{\min}(k)\geq p^*$ for all $k\geq 1$.
Since $\pi(k)$ is a stochastic vector, it further follows
$\pi_{\max}(k)\leq 1-(m-1)\pi_{\min}(k)\leq 1-(m-1)p^*$. Therefore,
for all $k\ge1$,
    \[\EXP{W_{S\bar{S}}(k)}\geq \frac{\pi_{\min}(k)}{\pi_{\max}(k)}
\EXP{W_{\bar{S}S}(k)}\geq \frac{p^*}{1-(m-1)p^*}\EXP{W_{\bar{S}S}(k)},\]
    for any non-trivial $S\subset[m]$. Thus, $\Wc$ is balanced with
a balancedness coefficient $\al=\frac{p^*}{1-(m-1)p^*}$.
    \end{proof}

%%%%% Not really relevant
%    Note that if we define a balanced chain to be a chain that is balanced
%from some time $t_0\geq 0$, then in
%Theorem~\ref{thrm:infflowstable:generalsteadystate}, it suffice to have
%$\liminf_{k\to\infty}\pi_i(k)\geq p^*>0$ for all $i\in [m]$.

Theorem~\ref{thrm:infflowstable:generalsteadystate} not only characterizes a
class of balanced chains, but it also provides an alternative characterization
of the balancedness for these chains. Thus, instead of verifying
Definition~\ref{eqn:balanceddef} for every nontrivial subset $S\subset[m]$,
for balancedness of independent random chains,
it suffices to find a sequence $\pc$ of stochastic (unit) left-eigenvectors
of the expected chain $\{\EXP{W(k)}\}$ such that the entries of the sequence
do not vanish as time goes to infinity.

\subsection{Absolute Probability Sequence for Balanced Chains}
In this section, we show that any independent random chain that is strongly
aperiodic and balanced must be in the class $\Pstar$.
The road map to prove this result is as follows: we first show that this result
holds for deterministic chains with uniformly bounded positive entries.
%where by a uniformly bounded chain $\Ac$ we mean that there exists
%a scalar $\gamma>0$ such that
%$A_{ij}(k)\geq \gamma$ whenever $A_{ij}(k)>0$.
Then, using this result and geometric properties of the set of strongly
aperiodic balanced chains, we prove the statement for deterministic chains,
which immediately implies the result for independent random chains.
To show the result for deterministic chains with uniformly bounded positive entries, we employ the
technique that is used to prove Proposition~4 in
\cite{Lorenz}. However, the argument given in \cite{Lorenz} needs some
extensions to fit in our more general assumption of balanced-ness.

Let $\Ac$ be a deterministic chain
of stochastic matrices. Let $S_j(k)$ be the set of indices  corresponding to
the positive entries in the $j$th column of $A(k:0)$, i.e.,
\[S_j(k)=\{\ell\in [m]\mid A_{\ell j}(k:0)>0\}\qquad
\hbox{for all $j\in [m]$ and all $k\geq 0$}.\]
Also, let $\mu_j(k)$ be the minimum value of these positive entries, i.e.,
\[\mu_j(k)=\min_{\ell\in S_{j}(k)}A_{\ell j}(k:0)>0.\]
\begin{lemma}\label{lemma:infflowstable:boundedcolumns}
Let $\Ac$ be a strongly aperiodic balanced chain such
that the positive entries in each $A(k)$ are  uniformly bounded from below
by a scalar $\gamma>0$. Then, $S_j(k)\subseteq S_j(k+1)$ and
$\mu_j(k)\geq \gamma^{|S_j(k)|-1}$ for all $j\in [m]$ and $k\geq 0$.
\end{lemma}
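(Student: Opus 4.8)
The plan is to prove the two claims of Lemma~\ref{lemma:infflowstable:boundedcolumns} by induction on $k$, tracking how the support and the minimal positive entry of each column of $A(k:0)$ evolve as we left-multiply by $A(k+1)$. The base case $k=0$ is trivial: $A(0:0)=I$, so each column $j$ has support $S_j(0)=\{j\}$ of size one, and $\mu_j(0)=1\geq\gamma^{0}$. For the inductive step, recall that $A(k+1:0)=A(k+1)A(k:0)$, so entrywise $A_{\ell j}(k+1:0)=\sum_{r=1}^m A_{\ell r}(k+1)A_{r j}(k:0)$.

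First I would establish the nesting $S_j(k)\subseteq S_j(k+1)$. The key structural input is strong aperiodicity, which for a deterministic chain is equivalent (as noted right after the definition in Section~\ref{sec:setting}) to $A_{\ell\ell}(k)\geq\tilde\gamma$ for all $\ell,k$. Hence every diagonal entry of $A(k+1)$ is positive, so if $\ell\in S_j(k)$ --- meaning $A_{\ell j}(k:0)>0$ --- then the $r=\ell$ term $A_{\ell\ell}(k+1)A_{\ell j}(k:0)$ in the sum is strictly positive, and since all entries are nonnegative no cancellation can occur; thus $A_{\ell j}(k+1:0)>0$, i.e.\ $\ell\in S_j(k+1)$. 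This gives the monotonicity of supports directly.

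The harder claim is the lower bound $\mu_j(k)\geq\gamma^{|S_j(k)|-1}$, and this is where balancedness must enter. The strategy, following the Proposition~4 argument of~\cite{Lorenz} that the authors cite, is to show that when the support of column $j$ grows by one index from step $k$ to $k+1$, the minimal positive entry degrades by at most a factor $\gamma$; when the support does not grow, the minimum does not decrease. For a newly activated index $\ell\in S_j(k+1)\setminus S_j(k)$, its positive value $A_{\ell j}(k+1:0)=\sum_{r}A_{\ell r}(k+1)A_{rj}(k:0)$ must be fed by at least one $r\in S_j(k)$ with $A_{\ell r}(k+1)>0$; bounding that positive entry below by $\gamma$ and $A_{rj}(k:0)$ below by $\mu_j(k)\geq\gamma^{|S_j(k)|-1}$ yields $A_{\ell j}(k+1:0)\geq\gamma\cdot\gamma^{|S_j(k)|-1}=\gamma^{|S_j(k)|}$. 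The bookkeeping point is that each enlargement of the support by one index costs exactly one extra factor of $\gamma$, so after reaching size $|S_j(k)|$ the exponent is $|S_j(k)|-1$. For an index already in $S_j(k)$, one uses the diagonal term together with the induction hypothesis to see its value stays at least $\mu_j(k)$, possibly improved.

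The main obstacle I anticipate is precisely the role of balancedness in controlling the support growth so that the exponent in $\gamma^{|S_j(k)|-1}$ is governed by the \emph{current} support size rather than by the step count $k$ (which could be arbitrarily large while $|S_j(k)|\leq m$). Strong aperiodicity alone guarantees monotone supports and the diagonal-feeding mechanism, but to guarantee that the minimal entry over the support does not decay faster than one $\gamma$-factor per new index --- and in particular to rule out pathological feeding patterns where a retained index loses mass --- one needs the balancedness inequality~\eqref{eqn:balanceddef} to ensure that the ``backward'' flow out of the support set cannot strip too much weight relative to the ``forward'' flow in. I would therefore spend most of the effort verifying carefully, for each fixed $j$ and each transition $k\to k+1$, that the combination of $A_{\ell\ell}(k+1)\geq\tilde\gamma$, the uniform lower bound $\gamma$ on positive entries, and balancedness forces the worst-case multiplicative loss to be a single factor of $\gamma$ per genuinely new support index, thereby closing the induction.
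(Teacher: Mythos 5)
Your induction setup, base case, support-nesting argument, and the bound for newly activated indices all match the paper's proof. But the central step is missing, and the one mechanism you offer in its place is incorrect. You claim that for an index $i$ already in $S_j(k)$, the diagonal term shows "its value stays at least $\mu_j(k)$." It does not: the diagonal term only gives $A_{ij}(k+1:0)\geq A_{ii}(k+1)A_{ij}(k:0)\geq\gamma\,\mu_j(k)$, since $A_{ii}(k+1)$ can be as small as $\gamma$. So a retained index can lose a factor $\gamma$ at every step, and if that happened while the support stayed fixed, the minimum would decay like $\gamma^{k}$, destroying the target bound $\gamma^{|S_j(k)|-1}$. You correctly flag this as "the main obstacle" and say balancedness must resolve it, but you never exhibit how --- the proposal stops exactly where the proof has to begin.

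The paper's resolution is a dichotomy on the outflow $A_{S_j(k)\bar{S}_j(k)}(k+1)$. If it is zero, then each row $i\in S_j(k)$ of $A(k+1)$ has all of its unit mass on columns in $S_j(k)$, so
\begin{align*}
A_{ij}(k+1:0)=\sum_{\ell\in S_j(k)}A_{i\ell}(k+1)A_{\ell j}(k:0)\geq \mu_j(k)\sum_{\ell\in S_j(k)}A_{i\ell}(k+1)=\mu_j(k),
\end{align*}
i.e., row-stochasticity, not the diagonal entry, is what prevents any loss; moreover, balancedness applied to $S_j(k)$ gives $0=A_{S_j(k)\bar{S}_j(k)}(k+1)\geq \al A_{\bar{S}_j(k)S_j(k)}(k+1)$, hence zero inflow, hence $S_j(k+1)=S_j(k)$ and the exponent is unchanged. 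If instead the outflow is positive, balancedness applied to the complement set gives $A_{\bar{S}_j(k)S_j(k)}(k+1)\geq \al A_{S_j(k)\bar{S}_j(k)}(k+1)>0$, so some $\hat\xi\notin S_j(k)$ has an entry at least $\gamma$ into $S_j(k)$ and joins the support, forcing $|S_j(k+1)|\geq |S_j(k)|+1$. In that case every entry of the new support (old or new) is only guaranteed to be at least $\gamma\,\mu_j(k)\geq\gamma^{|S_j(k)|}$, but this is enough precisely because the support grew: $\gamma^{|S_j(k)|}\geq\gamma^{|S_j(k+1)|-1}$. Note that the correct statement is not that retained indices never lose mass (they can, in the second case); it is that a loss can occur only simultaneously with strict support growth, and the support can grow at most $m-1$ times. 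Without this case split the induction does not close, so the proposal as written has a genuine gap.
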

\begin{proof}
Let $j\in[m]$ be arbitrary but fixed. By induction on $k$, we prove that
$S_j(k)\subseteq S_j(k+1)$ for all $k\ge0$ as well as the desired relation for
$\mu_j(k)$. For $k=0$, we have $A(0:0)=I$ by the definition, so $S_j(0)=\{j\}$.
Then, $A(1:0)=A(1)$ and by the strongly aperiodic assumption on the chain
$\Ac$ we have $A_{jj}(1)\ge\gamma$, implying $\{j\}=S_j(0)\subseteq S_j(1)$.
Furthermore, we have $|S_j(0)|-1=0$ and $\mu_j(0)=1=\gamma^0$.
Hence, the claim is true for $k=0$.

Now suppose that the claim is true for some $k\geq 0$, and consider $k+1$.
Then, for any $i\in S_j(k)$, we have:
\[A_{ij}(k+1:0)=\sum_{\ell=1}^{m}A_{i\ell}(k+1)A_{\ell j}(k:0)
\ge A_{ii}(k+1)A_{ij}(k:0)\ge\gamma\mu_j(k)>0.\]
Thus, $i\in S_j(k+1)$, implying $S_{j}(k)\subseteq S_j(k+1)$.

To show the relation for $\mu_j(k+1)$,
we consider two cases:\\
\textit{Case $A_{S_j(k)\bar{S}_j(k)}(k+1)=0$:}
In this case for any $i\in S_j(k)$, we have:
\begin{align}\label{eqn:one}
A_{ij}(k+1:0) %=\sum_{\ell=1}^{m}A_{i\ell}(k)A_{\ell j}(k:0)\cr &\quad
&=\sum_{\ell\in S_j(k)} A_{i\ell}(k)A_{\ell j}(k:0)\geq
\mu_j(k)\sum_{\ell\in S_j(k)}A_{i\ell}(k+1)=\mu_j(k),
\end{align}
where the inequality follows
from $i\in S_j(k)$ and $A_{S_j(k)\bar{S}_j(k)}(k+1)=0$,
and the definition of $\mu_j(k)$.
Furthermore, by the balancedness of $A(k)$ and $A_{S_j(k)\bar{S}_j(k)}(k+1)=0$,
it follows that
$0=A_{S_j(k)\bar{S}_j(k)}(k+1)\geq \al A_{\bar{S}_j(k)S_j(k)}(k+1)\geq 0$.
Hence,
$A_{\bar{S}_j(k)S_j(k)}(k+1)=0$. Thus, for any $i\in \bar{S}_j(k)$, we have
\[A_{ij}(k+1:0)=\sum_{\ell=1}^{m}A_{i\ell}(k+1)A_{\ell j}(k:0)
=\sum_{\ell\in \bar{S}_j(k)} A_{i\ell}(k+1)A_{\ell j}(k:0)=0,\]
where the second equality follows from $A_{\ell j}(k:0)=0$ for all
$\ell\in \bar{S}_j(k)$. Therefore, in this case we have $S_j(k+1)=S_j(k)$,
which by~\eqref{eqn:one} implies $\mu_j(k+1)\geq \mu_j(k).$
In view of $S_j(k+1)=S_j(k)$ and the inductive hypothesis, we further obtain
\[\mu_j(k)\geq \gamma^{|S_j(k)|-1}=\gamma^{|S_j(k+1)|-1},\]
implying $\mu_j(k+1)\ge \gamma^{|S_j(k+1)|-1}$. \\
\textit{Case $A_{S_j(k)\bar{S}_j(k)}(k+1)>0$:}
Since the chain is balanced, we have
\[A_{\bar{S}_j(k)S_j(k)}(k+1)\geq \al A_{S_j(k)\bar{S}_j(k)}(k+1)>0,\]
implying that
$A_{\bar{S}_j(k)S_j(k)}(k)>0$. Therefore, by the uniform boundedness
of $\{A(k)\}$, there exists
$\hat \xi\in \bar{S}_j(k)$ and $\hat \ell\in S_j(k)$ such that
$A_{\hat \xi \hat\ell}(k+1)\geq \gamma$. Hence, we have
\[A_{\hat \xi j}(k+1:0)\geq A_{\hat\xi \hat \ell}(k+1)A_{\hat \ell j}(k:0)
\geq \gamma \mu_j(k)= \gamma^{|S_j(k)|},\]
where the equality follows by the induction hypothesis.
Thus, $\hat \xi\in S_j(k+1)$ while $\hat\xi\not \in S_j(k)$, which implies
$|S_j(k+1)|\geq |S_j(k)|+1$. This, together with
$A_{\hat \xi j}(k+1:0)\geq \gamma^{|S_j(k)|}$, yields
$\mu_j(k+1)\geq \gamma^{|S_j(k)|}\geq \gamma^{|S_j(k+1)|-1}$.
\end{proof}

The bound on $\mu_j(k)$ of Lemma~\ref{lemma:infflowstable:boundedcolumns}
implies that the bound for the nonnegative entries given in
Proposition~4 of \cite{Lorenz} can be reduced from $\gamma^{m^2-m+2}$ to
$\gamma^{m-1}$.

Note that Lemma~\ref{lemma:infflowstable:boundedcolumns} holds for products
$A(k:t_0)$ starting with any $t_0\geq 0$,
(with appropriately defined $S_j(k)$ and $\mu_j(k)$). An immediate corollary
of Lemma~\ref{lemma:infflowstable:boundedcolumns} is the following result.
\begin{corollary}\label{cor:sumcolumn}
Under the assumptions of Lemma~\ref{lemma:infflowstable:boundedcolumns},
we have for all $k> t_0\ge0,$
\[\frac{1}{m}e^TA(k:t_0)\geq \min(\frac{1}{m},\gamma^{m-1})e^T,\]
where $e$ is the vector of ones and the inequality is
to be understood entry-wise.
\end{corollary}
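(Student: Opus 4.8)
The plan is to deduce Corollary~\ref{cor:sumcolumn} directly from the two conclusions of Lemma~\ref{lemma:infflowstable:boundedcolumns}, namely the monotonicity $S_j(k)\subseteq S_j(k+1)$ and the lower bound $\mu_j(k)\geq\gamma^{|S_j(k)|-1}$, reading everything column-by-column. Fix a starting time $t_0\geq 0$ and an index $j\in[m]$. For the product $A(k:t_0)$, the entries of the $j$th column are exactly the numbers $A_{\ell j}(k:t_0)$, and the content of the lemma is that the \emph{nonzero} ones among them are bounded below by $\mu_j(k)\geq\gamma^{|S_j(k)|-1}$. Since $S_j(k)\subseteq[m]$, we have $|S_j(k)|\leq m$, hence $|S_j(k)|-1\leq m-1$, so every positive entry of the $j$th column satisfies $A_{\ell j}(k:t_0)\geq\gamma^{m-1}$ (using $\gamma\in(0,1]$, so that larger exponents only decrease the bound).

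Next I would turn this per-entry bound into the claimed bound on the column sum $e^TA(k:t_0)$, whose $j$th coordinate is $\sum_{\ell=1}^m A_{\ell j}(k:t_0)=\sum_{\ell\in S_j(k)}A_{\ell j}(k:t_0)$. There are two regimes. If $|S_j(k)|=1$, i.e.\ the column has a single positive entry, then because the rows of $A(k:t_0)$ are stochastic that single entry is forced to equal $1$ (the rows sum to one and there is only one nonzero term in this column contributed across rows)—more carefully, one argues that a column with one support index $\ell$ has $A_{\ell j}(k:t_0)=\mu_j(k)=\gamma^0=1$ by the lemma, so the column sum is $\geq 1\geq\min(\tfrac1m,\gamma^{m-1})\cdot m$ is not quite the right framing; instead I would bound the column sum directly. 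If $|S_j(k)|\geq 1$ in general, the column sum is at least $|S_j(k)|\cdot\gamma^{m-1}\geq\gamma^{m-1}$, giving $\tfrac1m e^TA(k:t_0)$ at least $\tfrac1m\gamma^{m-1}$ coordinatewise, and this already dominates $\min(\tfrac1m,\gamma^{m-1})$ once we also use the trivial lower bound coming from stochasticity.

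The cleanest route, and the one I would write up, is to prove the single entrywise inequality $\big(e^TA(k:t_0)\big)_j\geq\min(1,m\,\gamma^{m-1})$ for each $j$ and then divide by $m$. To see this, note $\big(e^TA(k:t_0)\big)_j=\sum_{\ell\in S_j(k)}A_{\ell j}(k:t_0)\geq|S_j(k)|\,\gamma^{|S_j(k)|-1}$, using $A_{\ell j}(k:t_0)\geq\mu_j(k)\geq\gamma^{|S_j(k)|-1}$ for $\ell\in S_j(k)$. Writing $s=|S_j(k)|\in\{1,\dots,m\}$, the function $s\mapsto s\,\gamma^{s-1}$ with $\gamma\in(0,1]$ is minimized over $\{1,\dots,m\}$ at one of the endpoints, giving $s\,\gamma^{s-1}\geq\min(1,m\,\gamma^{m-1})$. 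Dividing by $m$ yields $\tfrac1m\big(e^TA(k:t_0)\big)_j\geq\min(\tfrac1m,\gamma^{m-1})$, which is the asserted entrywise inequality.

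The only genuinely delicate point—the part I expect to be the main obstacle—is the elementary extremization of $s\mapsto s\gamma^{s-1}$ over the integers $1\leq s\leq m$ and confirming it is bounded below by $\min(1,m\gamma^{m-1})$ rather than by something smaller in the interior. Since the discrete minimum of this function need not occur at an endpoint for intermediate $\gamma$, I would either (i) observe that a cleaner and fully sufficient bound is $A_{\ell j}(k:t_0)\geq\gamma^{m-1}$ for every positive entry (because $|S_j(k)|-1\leq m-1$ and $\gamma\leq1$), whence the column sum is at least $\gamma^{m-1}$ outright, so $\tfrac1m e^TA(k:t_0)\geq\tfrac1m\gamma^{m-1}\geq\gamma^{m-1}\cdot\tfrac1m$; or (ii) combine this with the observation that if the column sum is not close to its minimal single-entry value then stochasticity forces it up. Route (i) sidesteps the extremization entirely and matches the stated bound, so that is the version I would record, reserving the $\min$ with $\tfrac1m$ for the case where the weaker factor-of-$m$ loss is unavoidable.
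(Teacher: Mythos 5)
Your third paragraph is, essentially word for word, the paper's own proof: bound the $j$th column sum below by $|S_j(k)|\,\gamma^{|S_j(k)|-1}$ using Lemma~\ref{lemma:infflowstable:boundedcolumns}, observe that the minimum of $s\mapsto s\gamma^{s-1}$ over $[1,m]$ is attained at an endpoint, conclude the column sum is at least $\min(1,m\gamma^{m-1})$, and divide by $m$. Had you stopped there, the proof would be complete and identical in approach to the paper's.

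The problem is what you do afterwards. Your worry that ``the discrete minimum of this function need not occur at an endpoint for intermediate $\gamma$'' is unfounded: for $f(s)=s\gamma^{s-1}$ with $\gamma\in(0,1]$, the ratio of consecutive values is $f(s+1)/f(s)=\frac{s+1}{s}\gamma$, which is decreasing in $s$, so $f$ increases as long as this ratio exceeds $1$ and decreases thereafter; a sequence that rises and then falls attains its minimum over $\{1,\dots,m\}$ at $s=1$ or $s=m$ (equivalently, $f'(t)=\gamma^{t-1}(1+t\ln\gamma)$ changes sign at most once, from positive to negative, so any interior critical point is a maximum). Hence $s\gamma^{s-1}\geq\min(1,m\gamma^{m-1})$ is exactly right and needs no repair. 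Worse, the repair you elect to record---route (i)---does not prove the corollary: it yields only $\frac{1}{m}e^TA(k:t_0)\geq \frac{1}{m}\gamma^{m-1}e^T$, and $\frac{1}{m}\gamma^{m-1}\leq\min\left(\frac{1}{m},\gamma^{m-1}\right)$ (it is at most each of the two terms in the minimum), not $\geq$, with strict inequality whenever $m\geq 2$ and $\gamma<1$. The same reversed inequality appears in your second paragraph (``this already dominates $\min(\frac{1}{m},\gamma^{m-1})$''), and the appeal to ``the trivial lower bound coming from stochasticity'' cannot rescue it, since row-stochasticity gives no lower bound whatsoever on column sums. So the version you say you would write up proves a strictly weaker statement than the corollary claims: keep your third paragraph, whose key extremization step is correct as stated, and discard the rest.
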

\begin{proof}
Without loss of generality, let us assume that $t_0=0$. Then, by
Lemma~\ref{lemma:infflowstable:boundedcolumns} we have
$\frac{1}{m}e^TA^j(k:0)\geq \frac{1}{m}|S_j(k)|\gamma^{|S_j(k)|-1}$
for any $j\in[m]$, where $A^j$ denotes the $j$th column of $A$.
For $\gamma\in[0,1]$, the function
$t\mapsto t\gamma^{t-1}$ defined on $[1,m]$ attains its minimum at either
$t=1$ or $t=m$. Therefore,
$\frac{1}{m}e^TA(k:1)\geq \min(\frac{1}{m},\gamma^{m-1})e^T$.
\end{proof}

Now, we relax the assumption on the bounded entries in
Corollary~\ref{cor:sumcolumn}.
\begin{theorem}\label{thm:boundedcolumns}
Let $\Ac$ be a balanced and strongly aperiodic chain. Then, there is a scalar
$\gamma\in (0,1]$ such that
$\frac{1}{m}e^TA(k:0)\geq \min(\frac{1}{m},\gamma^{m-1})e^T$ for all
$k\geq 1$.
\end{theorem}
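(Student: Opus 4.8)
The plan is to reduce the general balanced, strongly aperiodic chain $\Ac$ to the special case already handled in Corollary~\ref{cor:sumcolumn}, where the positive entries are uniformly bounded from below. The obstacle is precisely that a general balanced, strongly aperiodic chain need not have its positive entries bounded away from zero: the entries $A_{ij}(k)$ may become arbitrarily small while still being positive. So the core idea is a perturbation/approximation argument. Given $\Ac$, I would construct from it an auxiliary chain $\tilde{\Ac}$ whose positive entries are all at least some fixed $\gamma>0$, which is still stochastic, still strongly aperiodic, and still balanced (with a possibly different balancedness coefficient), and which agrees with $\Ac$ closely enough that the column-sum estimate transfers.

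The construction I have in mind is to threshold small entries: fix $\gamma>0$ small and, in each matrix $A(k)$, replace every off-diagonal entry $A_{ij}(k)$ that lies in $(0,\gamma)$ either by $0$ or by $\gamma$, compensating on the diagonal to keep each row summing to one. The diagonal entries stay bounded below by $\tilde\gamma>0$ (from strong aperiodicity), so shaving off a small amount of off-diagonal mass keeps the matrix stochastic with nonnegative entries. The delicate point is to carry this out so that the resulting $\tilde{\Ac}$ remains \emph{balanced}: rounding an entry in $S\bar S$ down to $0$ while rounding one in $\bar S S$ up to $\gamma$ could destroy the inequality $\tilde A_{S\bar S}\geq \tilde\al\, \tilde A_{\bar S S}$. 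I expect this bookkeeping to be the main obstacle, and the natural fix is to round symmetrically and to choose $\gamma$ uniformly small relative to the balancedness coefficient $\al$ of $\Ac$, so that the perturbation in the numerator and denominator of the balance ratio is controlled; one shows $\tilde{\Ac}$ is balanced with coefficient, say, $\al/2$.

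Having produced such a $\tilde{\Ac}$, I would apply Corollary~\ref{cor:sumcolumn} to obtain
\[\frac{1}{m}e^T\tilde A(k:0)\geq \min\!\left(\tfrac{1}{m},\gamma^{m-1}\right)e^T
\qquad\hbox{for all }k\geq 1.\]
The final step is to propagate this bound back to $\Ac$. Since the thresholding perturbs each matrix only slightly, the products $A(k:0)$ and $\tilde A(k:0)$ should stay uniformly close; one way is to show that the support of $\tilde A(k:0)$ is contained in that of $A(k:0)$ (thresholding only removes or preserves positive entries), so that a column of $A(k:0)$ cannot vanish where the corresponding column of $\tilde A(k:0)$ is bounded below. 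Then the desired lower bound $\frac{1}{m}e^TA(k:0)\geq \min(\frac{1}{m},\gamma^{m-1})e^T$ follows, possibly after shrinking $\gamma$ once more. The upshot is that this theorem is the device that lets us drop the artificial ``uniformly bounded entries'' hypothesis and conclude that every balanced, strongly aperiodic deterministic chain admits a uniformly positive absolute probability sequence, i.e.\ lies in $\Pstar$.
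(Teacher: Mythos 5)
Your reduction strategy (modify $\Ac$ into a chain with uniformly bounded positive entries, apply Corollary~\ref{cor:sumcolumn}, then transfer back) founders at the transfer step, and this is a genuine gap rather than bookkeeping. Thresholding perturbs every matrix $A(k)$ by an amount of order $m\gamma$, and such perturbations are not summable in $k$; in an infinite product the discrepancies accumulate, so $A(k:0)$ and $\tilde A(k:0)$ need not stay close as $k\to\infty$ (compare with the $\ell_1$-approximation notion used in the paper, which demands $\sum_k|A_{ij}(k)-B_{ij}(k)|<\infty$ precisely for this reason). Your fallback argument via supports is only qualitative: containment of the support of $\tilde A(k:0)$ in that of $A(k:0)$ shows certain entries of $A(k:0)$ are positive at each finite $k$, but the theorem is a quantitative claim --- a lower bound on column sums \emph{uniform in $k$} --- and no amount of shrinking $\gamma$ extracts that from positivity alone. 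There is also trouble earlier in the construction: rounding small entries down to $0$ can destroy balancedness outright no matter how small $\gamma$ is, because the balance condition is a scale-free ratio condition while thresholding is not. For instance, if the cut $S\bar S$ carries a single entry $0.9\gamma$ (rounded to $0$) while the reverse cut carries one entry $1.8\gamma$ (kept), the original matrix is balanced with $\al=1/2$ but the thresholded one satisfies no balance inequality with positive coefficient. Rounding \emph{up} to $\gamma$ can be arranged to preserve balancedness with a degraded coefficient, but that still leaves the transfer problem untouched.

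The missing idea --- and the route the paper takes --- is to replace approximation by an \emph{exact convex decomposition}. The set $\mathbf{B}_{\al,\beta}$ in \eqref{eqn:Bcal} of stochastic matrices that are balanced with coefficient $\al$ and have diagonal entries at least $\beta$ is a bounded polyhedron, hence has finitely many extreme points $Q^{(\xi)}$, whose positive entries are bounded below by a single constant $\gamma(\al,\beta)>0$. Each $A(k)$ is written exactly as $\sum_\xi \lambda_\xi(k)Q^{(\xi)}$; one then defines an independent random chain $\Wc$ with $W(k)=Q^{(\xi)}$ with probability $\lambda_\xi(k)$. Every sample path of $\Wc$ satisfies the hypotheses of Corollary~\ref{cor:sumcolumn} with the same $\gamma$, so the column-sum bound holds pathwise, and independence gives $\EXP{W(k:0)}=A(k:0)$, so taking expectations transfers the bound to $A(k:0)$ with no error at all. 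In short: do not approximate each $A(k)$ by one bounded-entry matrix; write it exactly as a mixture of finitely many bounded-entry matrices, so the product becomes an exact mixture of products to which the bounded-entry case applies.
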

\begin{proof}
Let $\al>0$ be a balancedness coefficient for $\Ac$ and
let $A_{ii}(k)\geq \beta>0$ for all $i\in[m]$ and $k\geq 1$.
Further, let $\mathbf{B}_{\al,\beta}$ be the set of balanced matrices
with the balancedness coefficient $\al$ and strongly aperiodic matrices
with a coefficient $\beta>0$, i.e.,
\begin{align}\label{eqn:Bcal}
&\mathbf{B}_{\al,\beta}:=\left\{Q\in\mathbb{R}^{m\times m}\mid Q\geq 0,Qe=e,
\right.\\\nonumber
&\quad\left. Q_{S\bar{S}}\geq \al Q_{\bar{S}S}\mbox{ for all non-trivial
$S\subset [m]$},
Q_{ii}\geq \beta\mbox{ for all $i\in[m]$}\right\}.
\end{align}
The description in relation \eqref{eqn:Bcal} shows that
$\mathbf{B}_{\al,\beta}$ is
a bounded polyhedral set in $\R^{m\times m}$. Let
$\{ Q^{(\xi)}\in \mathbf{B}_{\al,\beta}\mid \xi\in [n_{\al,\beta}]\}$
be the set of extreme points of this polyhedral set indexed by the positive
integers between $1$ and $n_{\al,\beta},$ which is the total
number of extreme points of $\mathbf{B}_{\al,\beta}$.

Since $A(k)\in \mathbf{B}_{\al,\beta}$ for all $k\geq 1$, we can write
$A(k)$ as a convex combination of the extreme points in
$\mathbf{B}_{\al,\beta}$, i.e.,
there exist coefficients $\lambda_{\xi}(k)\in[0,1]$ such that
\begin{align}\label{eqn:initialeqn}
A(k)=\sum_{\xi=1}^{n_{\al,\beta}}\lambda_{\xi}(k)Q^{(\xi)}
\quad\hbox{with }\sum_{\xi=1}^{n_{\al,\beta}}\lambda_{\xi}(k)=1.
\end{align}
Now, consider the following independent random matrix process defined by:
\begin{align*}%\label{eqn:random}
W(k)=Q^{(\xi)}\quad \mbox{with probability $\lambda_{\xi}(k)$}
\qquad\hbox{for all }k\ge1.
\end{align*}
In view of this definition any sample path of $\Wc$
consists of extreme points of $\mathbf{B}_{\al,\beta}$. Thus,
every sample path of $\Wc$ has a coefficient bounded by
the minimum positive entry of the matrices in
$\{ Q^{(\xi)}\in \mathbf{B}_{\al,\beta}\mid \xi\in [n_{\al,\beta}]\}$,
denoted by $\gamma=\gamma(\al,\beta)>0$, where $\gamma>0$ since
$n_{\al,\beta}$ is finite.
Therefore, by Corollary~\ref{cor:sumcolumn}, we have
$\frac{1}{m}e^TW(k:t_0)\geq \min(\frac{1}{m},\gamma^{m-1})e^T$ for all
$k>t_0\geq 0$. Furthermore, by Eq.~\eqref{eqn:initialeqn} we have
$\EXP{W(k)}=A(k)$ for all $k\geq 1$, implying
\[\frac{1}{m}e^TA(k:t_0)=\frac{1}{m}e^T\EXP{W(k:t_0)}
\geq \ \min\left(\frac{1}{m},\gamma^{m-1}\right)e^T,\]
which follows from $\Wc$ being independent.
\end{proof}

Based on the above results, we are ready to prove the main result for
deterministic chains.
\begin{theorem}\label{thrm:balanceddeterministic}
Any balanced and strongly aperiodic chain $\Ac$ is in the class $\Pstar$.
\end{theorem}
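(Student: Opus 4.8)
The goal is to show that any balanced and strongly aperiodic deterministic chain $\Ac$ admits an absolute probability sequence $\pc$ that is uniformly bounded below by some $p^*>0$. The plan is to extract such a sequence as a limit of the normalized row-averages appearing in Theorem~\ref{thm:boundedcolumns}. Concretely, for each $k\ge 0$ define the stochastic vector $\pi^T(k) = \frac{1}{m}e^T A(\infty:k)$, where $A(\infty:k)$ would be the limit of the backward products $A(t:k)$ as $t\to\infty$ — provided such a limit exists. The key identity to exploit is the cocycle relation $A(t:k) = A(t:k+1)A(k+1)$, which upon multiplying by $\frac{1}{m}e^T$ on the left and passing to the limit in $t$ gives exactly $\pi^T(k) = \pi^T(k+1)A(k+1)$, i.e.\ the defining relation of an absolute probability sequence.

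First I would establish that the limit $\lim_{t\to\infty}\frac{1}{m}e^TA(t:k)$ exists for each fixed $k$. The natural tool is Corollary~\ref{cor:infflowstable:sumfinite} together with Lemma~\ref{lemma:infflowstable:suminfinity}: a strongly aperiodic balanced chain is weakly aperiodic and lies in $\Pstar$ only after we prove this theorem, so I cannot invoke those circularly. Instead I would argue monotonicity or Cauchy-ness directly. A cleaner route uses Theorem~\ref{thm:boundedcolumns}: it already guarantees $\frac{1}{m}e^TA(t:k)\ge \min(\frac{1}{m},\gamma^{m-1})e^T$ uniformly in $t>k$, so every component of the candidate vector is bounded below by $p^*:=\min(\frac{1}{m},\gamma^{m-1})>0$ once the limit is shown to exist. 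For existence of the limit I would consider the sequence $\xi^T(t):=\frac{1}{m}e^TA(t:k)$ and show it converges; since each $\xi(t)$ is a stochastic vector living in a compact simplex, it suffices to show all subsequential limits coincide, which follows from the convergence (mutual ergodicity) guaranteed by applying the earlier dynamical-system machinery to the chain itself.

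The cleanest self-contained argument avoids circularity by running the $\Pstar$-free part of the earlier results. Here is the logical ordering I would follow: (i) show existence of $v(k):=\lim_{t\to\infty}\frac{1}{m}e^TA(t:k)^T$ using compactness plus the structure from Theorem~\ref{thm:boundedcolumns}; (ii) verify the cocycle limit passes through the finite matrix product $A(k+1)$ (this is automatic, since $A(k+1)$ is a fixed $m\times m$ matrix and matrix multiplication is continuous); (iii) conclude $v^T(k)=v^T(k+1)A(k+1)$ and that each $v(k)$ is stochastic with $v_i(k)\ge p^*>0$; (iv) identify $\pi(k):=v(k)$ as the required absolute probability sequence with $\pc\ge p^*$, placing $\Ac$ in $\Pstar$.

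The main obstacle is step (i): proving that the backward products' row-averages actually converge rather than merely stay bounded. Uniform positivity from Theorem~\ref{thm:boundedcolumns} gives boundedness away from zero but not convergence per se. I expect to close this gap by showing the columns of $A(t:k)$ become asymptotically constant within each connected component of the infinite flow graph — essentially invoking Lemma~\ref{lemma:random:extensionthrm1} and the mutual-ergodicity analysis specialized to deterministic chains, which requires first verifying that the $V_\pi$-based summability bound of Corollary~\ref{cor:infflowstable:sumfinite} forces the required consensus along each component. Care must be taken to apply these ingredients without presupposing membership in $\Pstar$; the right move is to use an \emph{arbitrary} absolute probability sequence (which exists by Kolmogorov for any deterministic chain), derive the summability $\sum_k\sum_{i<j}(A_{ij}+A_{ji})(x_i-x_j)^2<\infty$ along the dynamics, and then feed this into Lemma~\ref{lemma:infflowstable:suminfinity} to force convergence, thereby establishing the limit and hence the uniformly-bounded-below absolute probability sequence.
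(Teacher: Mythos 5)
You correctly pull the uniform lower bound $p^*=\min(\frac{1}{m},\gamma^{m-1})$ from Theorem~\ref{thm:boundedcolumns}, and you correctly identify the cocycle relation $A(t:k)=A(t:k+1)A(k+1)$ as the mechanism that turns a limit of $\frac{1}{m}e^TA(t:k)$ into an absolute probability sequence. The genuine gap is your step (i), and the repair you propose for it is circular. Corollary~\ref{cor:infflowstable:sumfinite}, applied with an \emph{arbitrary} Kolmogorov absolute probability sequence $\pc$, bounds only the weighted sum $\sum_{k}\sum_{i<j}L_{ij}(k)(x_i(k)-x_j(k))^2$ with $L(k)=A^T(k+1)\diag(\pi(k+1))A(k+1)$. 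To pass from $L_{ij}(k)$ to $A_{ij}(k+1)+A_{ji}(k+1)$ --- which is what Lemma~\ref{lemma:infflowstable:suminfinity} requires as input --- you need an estimate of the form $L_{ij}(k)\geq \pi_i(k+1)A_{ii}(k+1)A_{ij}(k+1)+\pi_j(k+1)A_{jj}(k+1)A_{ji}(k+1)\geq p^*\tilde\gamma\left(A_{ij}(k+1)+A_{ji}(k+1)\right)$, i.e., you need $\pi(k+1)\geq p^*>0$ entrywise: exactly the conclusion of the theorem you are proving. (This is precisely how the paper combines these two results in the proof of Theorem~\ref{thrm:infflowstability:infiniteflowstability}, where membership in $\Pstar$ is a \emph{hypothesis}.) An arbitrary absolute probability sequence gives no such bound, and need not be bounded away from zero at all: for the static chain $A(k)=I$, which is balanced and strongly aperiodic, the constant sequence $\pi(k)=e_1$ is an absolute probability sequence with zero entries. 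Moreover, the full convergence $\lim_{t\to\infty}A(t:k)$ you insist on in step (i) is essentially the content of Theorem~\ref{thrm:mainresult} restricted to deterministic chains, whose proof in the paper rests on the present theorem; so that route cannot be made non-circular within the paper's architecture.

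The idea you are missing is that a \emph{subsequential} limit suffices, and this is how the paper (following Kolmogorov) proceeds. By compactness of the set of stochastic matrices and a diagonal argument, there is a single sequence of times $\{t_r\}$ along which $A(t_r:k)\to Q(k)$ for \emph{every} $k\geq 0$ simultaneously. The cocycle relation passes to the limit along this common subsequence, giving $Q(k)=Q(k+1)A(k+1)$, so that $\pi^T(k):=\frac{1}{m}e^TQ(k)$ (more generally $\pi^TQ(k)$ for any stochastic $\pi$) satisfies $\pi^T(k+1)A(k+1)=\pi^T(k)$, i.e., it is an absolute probability sequence. The uniform bound of Theorem~\ref{thm:boundedcolumns} holds for all $t>k$, hence survives the subsequential limit and yields $\frac{1}{m}e^TQ(k)\geq p^*e^T$. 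That finishes the proof: no convergence of the full backward products, no consensus argument, and none of the dynamical-system machinery is needed; whether distinct subsequential limits coincide is irrelevant, since producing \emph{one} uniformly bounded absolute probability sequence is all that membership in $\Pstar$ demands.
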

\begin{proof}
  As pointed out in \cite{KolmogoroffMarkov} for any chain $\Ac$,
there exists a sequence $\{t_r\}$ of time indices, such that for all
$k\geq 0$, $\lim_{r\to\infty}A(t_r:k)=Q(k)$ exists and,
for any stochastic vector $\pi\in \Rm$, the sequence $\{Q^T(k)\pi\}$ is an
absolute probability sequence for $\Ac$.
Since $\Ac$ is a balanced and strongly aperiodic chain,
by Theorem~\ref{thm:boundedcolumns} it follows that
\[\frac{1}{m}e^TQ(k)=\frac{1}{m}\lim_{r\rightarrow\infty}e^TA(t_r:k)
\geq p^* e^T\qquad\hbox{for all }k\ge0,\]
with $p^*=\min(\frac{1}{m},\gamma^{m-1})>0$.
Thus, $\{\frac{1}{m}e^TQ(k)\}$ is a uniformly bounded
absolute probability sequence for $\Ac$.
\end{proof}

The main result of this section follows immediately from
Theorem~\ref{thrm:balanceddeterministic}.
\begin{theorem}\label{thrm:infflowstability:balancedadapted}
Any balanced and strongly aperiodic independent random chain
is in the class $\Pstar$.
\end{theorem}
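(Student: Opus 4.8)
The plan is to reduce the random statement entirely to the deterministic result already in hand, Theorem~\ref{thrm:balanceddeterministic}, by passing to the expected chain $\{\EXP{W(k)}\}$. First I would record the trivial but necessary observation that for an independent random chain $\Wc$ the expected chain $\{\EXP{W(k)}\}$ is a genuine \emph{deterministic} chain of stochastic matrices, since the expectation of an almost surely stochastic matrix is again stochastic. So the strategy is: transfer the hypotheses of Theorem~\ref{thrm:infflowstability:balancedadapted} from $\Wc$ to $\{\EXP{W(k)}\}$, invoke the deterministic theorem, and then lift the resulting absolute probability sequence back up to an absolute probability process for $\Wc$.

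The first substantive step is the transfer of hypotheses. For balancedness, linearity of expectation gives $(\EXP{W(k)})_{S\bar{S}}=\EXP{W_{S\bar{S}}(k)}$ for every nontrivial $S\subset[m]$, so the defining inequality~\eqref{eqn:balanceddef} for $\Wc$ is \emph{verbatim} the deterministic balancedness inequality for $\EXP{W(k)}$ with the same coefficient $\al$. For strong aperiodicity, the elementary computation carried out immediately after the definition of strong aperiodicity shows that $\EXP{W_{ii}(k)}=(\EXP{W(k)})_{ii}$ is bounded below by a positive constant uniformly in $i$ and $k$; by the stated characterization of strong aperiodicity for deterministic chains (namely a uniform positive lower bound on the diagonal), $\{\EXP{W(k)}\}$ is strongly aperiodic. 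Hence $\{\EXP{W(k)}\}$ is a balanced and strongly aperiodic deterministic chain.

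I would then apply Theorem~\ref{thrm:balanceddeterministic} to $\{\EXP{W(k)}\}$ to conclude it lies in $\Pstar$: there is an absolute probability sequence $\pc$ for $\{\EXP{W(k)}\}$ with $\pi_i(k)\geq p^*>0$ for all $k$ and $i$. The last step is to promote this deterministic sequence to an absolute probability process for $\Wc$. Taking $\Fc$ to be the natural filtration of $\Wc$, the sequence $\pc$ is (being deterministic) trivially adapted and almost surely stochastic, so conditions (1) and (2) of Definition~\ref{def:absoluteprobability} hold automatically; and since $W(k+1)$ is independent of $\F_k$,
\[\EXP{\pi^T(k+1)W(k+1)\mid \F_k}=\pi^T(k+1)\EXP{W(k+1)}=\pi^T(k),\]
where the final equality is exactly the absolute-probability relation for the expected chain. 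This verifies condition (3), so $\pc$ is an absolute probability process for $\Wc$ that is uniformly bounded below by $p^*$, i.e.\ $\Wc\in\Pstar$.

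There is no serious obstacle left once Theorem~\ref{thrm:balanceddeterministic} is available; the genuine difficulty of the result has already been absorbed into the deterministic case (and, before that, into the polyhedral/extreme-point argument of Theorem~\ref{thm:boundedcolumns}). The only points demanding care are bookkeeping: confirming that balancedness and strong aperiodicity descend to the expected chain \emph{with usable constants}, and verifying the conditional-expectation identity in Definition~\ref{def:absoluteprobability}. It is worth emphasizing that this last identity is the \emph{sole} place where the independence hypothesis on $\Wc$ is used, since it is precisely independence of $W(k+1)$ from $\F_k$ that lets the conditional expectation collapse to $\EXP{W(k+1)}$ and thereby links $\pi^T(k+1)$ back to $\pi^T(k)$ through the expected chain.
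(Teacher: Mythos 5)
Your proof is correct and takes essentially the same route as the paper: the paper's own (much terser) proof likewise applies Theorem~\ref{thrm:balanceddeterministic} to the expected chain $\{\EXP{W(k)}\}$ and invokes the observation that, by independence, any absolute probability sequence for the expected chain is an absolute probability process for $\Wc$. Your write-up simply fills in the bookkeeping the paper leaves implicit (the transfer of balancedness and strong aperiodicity to the expected chain, and the verification of condition (3) of Definition~\ref{def:absoluteprobability}).
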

\begin{proof}
The proof follows immediately by noticing that,
for an independent random chain, $\Wc$, any absolute probability sequence
for the expected chain $\{\EXP{W(k)}\}$ is an absolute probability process for
$\Wc$.
\end{proof}

As a result of Theorem~\ref{thrm:infflowstability:balancedadapted} and
Theorem~\ref{thrm:infflowstability:infiniteflowstability}, the proof of
Theorem~\ref{thrm:mainresult} follows immediately. In particular
by Theorem~\ref{thrm:infflowstability:balancedadapted}, any independent random
chain that is balanced and strongly aperiodic belongs to the class $\Pstar$.
Thus, the result follows by
Theorem~\ref{thrm:infflowstability:infiniteflowstability}.

\section{Connection to Non-negative Matrix Theory}\label{sec:connection}
In this section, we show that Theorem~\ref{thrm:mainresult} is a generalization
of the following well-known result in the non-negative matrix theory which
plays a central role in the theory of ergodic Markov chains.

\begin{lemma}\label{lemma:perron}(\cite{Kumar}, page 46)
For an aperiodic and irreducible stochastic matrix $A$, the limit
$\lim_{k\to\infty}A^k$ exists and it
is equal to a rank one stochastic matrix.
\end{lemma}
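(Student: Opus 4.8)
The plan is to derive Lemma~\ref{lemma:perron} as a corollary of Theorem~\ref{thrm:mainresult} by viewing the constant chain $W(k)=A$ for all $k\geq 1$ as a (trivial) independent random chain and verifying that it satisfies the two hypotheses of the theorem, namely balancedness and strong aperiodicity, and that its infinite flow graph is connected. Since $A$ is aperiodic and irreducible, I first recall that a standard consequence of the Perron--Frobenius theory for irreducible stochastic matrices is the existence of a unique \emph{positive} stationary distribution $\pi>0$ with $\pi^TA=\pi^T$. This positive left-eigenvector is exactly the ingredient needed: treating the deterministic chain as a random chain, the constant sequence $\pc$ with $\pi(k)=\pi$ is a sequence of stochastic left-eigenvectors of the expected chain $\{\EXP{W(k)}\}=\{A\}$, and since $\pi>0$ on a finite index set we have $\pi\geq p^*$ for $p^*=\pi_{\min}>0$. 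Hence by Theorem~\ref{thrm:infflowstable:generalsteadystate} the chain is balanced (with coefficient $\al=\frac{p^*}{1-(m-1)p^*}$); alternatively one invokes Lemma~\ref{lemma:commonsteadybalance} directly to get balancedness with $\al=\pi_{\min}/\pi_{\max}$.

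Next I would verify strong aperiodicity. For a deterministic chain the excerpt already records that strong aperiodicity is equivalent to $A_{ii}(k)\geq\tilde\gamma$ for some $\tilde\gamma>0$ uniformly in $i,k$. For the constant chain this reduces to showing $A_{ii}>0$ for all $i\in[m]$, after which we simply take $\tilde\gamma=\min_i A_{ii}>0$. This positivity of the diagonal is where I expect the aperiodicity assumption on $A$ to do its work. The main obstacle is precisely this point: aperiodicity of an irreducible stochastic matrix is the statement that the period of the associated graph is $1$, which does \emph{not} literally assert $A_{ii}>0$. A rank-one limit can exist even when $A$ has zero diagonal entries, so a naive application of Theorem~\ref{thrm:mainresult} to $A$ itself could fail. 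I would resolve this by passing to a positive power: since $A$ is irreducible and aperiodic, there is an integer $N$ such that $A^N>0$ entrywise (all diagonal entries in particular are strictly positive). I then apply Theorem~\ref{thrm:mainresult} to the constant chain $\bar{A}=A^N$, which is itself stochastic, has the same stationary vector $\pi>0$, and now manifestly satisfies $\bar A_{ii}>0$, hence is strongly aperiodic; it is balanced by the same eigenvector argument, and since $A^N>0$ its infinite flow graph is the complete graph and in particular connected.

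With the hypotheses in place, Theorem~\ref{thrm:mainresult} (applied with $t_0=0$ to the chain $\bar A=A^N$) yields that $\bar A^{k}=(A^N)^k=A^{Nk}$ converges almost surely --- deterministically, since the chain is deterministic --- to a stochastic matrix whose rows are all equal, i.e.\ a rank-one stochastic matrix $ev^T$, because the infinite flow graph is connected. Thus the subsequence $\{A^{Nk}\}$ converges to $ev^T$. To upgrade convergence of this subsequence to convergence of the full sequence $\{A^k\}$, I would observe that $A(ev^T)=e(A^Tv)^T=ev^T$ would follow once we know $v^TA=v^T$; indeed, multiplying the limit relation $A\cdot A^{Nk}\to A\cdot ev^T$ against $A^{Nk+1}\to ev^T$ forces $ev^T=A\,ev^T$, so each intermediate power $A^{Nk+r}$ for $0\le r<N$ converges to $A^r ev^T=ev^T$. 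Interleaving these finitely many convergent subsequences, all with the same limit $ev^T$, shows $\lim_{k\to\infty}A^k=ev^T$ exists and equals a rank-one stochastic matrix, which is the claim. The only genuinely delicate point, as noted, is the reduction to a power to secure a positive diagonal; everything else is a direct specialization of the already-established machinery.
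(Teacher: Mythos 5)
Your proof is correct, but it takes a route the paper itself deliberately does not take: the paper offers no proof of this lemma at all, quoting it from \cite{Kumar} as a classical fact, and Section~\ref{sec:connection} runs the implication in the opposite direction, presenting Theorem~\ref{thrm:implication:extensioncor} as a \emph{generalization} of the lemma. What you have done is make rigorous the derivation that the paper only gestures at: the paper's Lemma~\ref{lemma:implication:equivalency} shows that irreducibility of $A$ is equivalent to balancedness of the static chain $\{A\}$ together with connectivity of its infinite flow graph, and its remark that aperiodicity yields $A^h_{ii}\geq\gamma>0$ for some $h\geq 1$ is precisely your passage to a primitive power. The differences are minor but real. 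First, you obtain balancedness from the Perron--Frobenius positive stationary vector $\pi>0$ via Lemma~\ref{lemma:commonsteadybalance} (or Theorem~\ref{thrm:infflowstable:generalsteadystate}), which imports an external eigenvector theorem; the paper's Lemma~\ref{lemma:implication:equivalency} gets it directly from irreducibility, since $A_{S\bar{S}}>0$ for every nontrivial $S\subset[m]$ --- and once you have $A^N>0$ entrywise, balancedness of $\{A^N\}$ is immediate anyway, with no eigenvector needed. Second, your identification of the diagonal-positivity obstruction (aperiodicity of $A$ does not literally mean $A_{ii}>0$), its resolution by primitivity, and the interleaving of the $N$ subsequences $\{A^{Nk+r}\}$, $0\le r<N$, is exactly the bookkeeping suppressed by the paper's one-line remark; this is the genuinely necessary content. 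One slip in your write-up: to show $A\,(ev^T)=ev^T$ you appeal to $v^TA=v^T$ and to the limit $A^{Nk+1}\to ev^T$, which is circular, since that limit is what you are trying to establish; but no such detour is needed, because $A(ev^T)=(Ae)v^T=ev^T$ follows from row-stochasticity alone, and then the interleaving step $A^{Nk+r}=A^rA^{Nk}\to A^r(ev^T)=ev^T$ uses only $A^re=e$. With that one-line repair, your argument is a complete, self-contained proof of the lemma from the paper's own machinery; it also confirms there is no circularity in doing so, since none of the ingredients in the proof of Theorem~\ref{thrm:mainresult} rely on the classical result.
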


Recall that a stochastic matrix $A$ is irreducible if there is no
permutation matrix $P$ such that
    \[P^TAP=\left[
    \begin{array}{cc}
    X&Y\\\mathbf{0}&Z
  \end{array}\right],\]
where $X,Y,Z$ are $i\times i$, $i\times (m-i)$, and $(m-i)\times (m-i)$
matrices for some $i\in [m-1]$ and $\mathbf{0}$ is the $(m-i)\times i$ matrix
with all entries equal to zero.

Let us reformulate irreducibility using the tools we have developed in this
paper.
\begin{lemma}\label{lemma:implication:equivalency}
 A stochastic matrix $A$ is an irreducible matrix if and only if the static
chain $\{A\}$ is balanced and its infinite flow graph is connected.
 \end{lemma}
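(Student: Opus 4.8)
The plan is to prove the biconditional in Lemma~\ref{lemma:implication:equivalency} by establishing each direction separately, translating the combinatorial/algebraic notion of irreducibility into the language of balancedness and the infinite flow graph developed above. For the static chain $\{A\}$, the infinite flow graph has an edge $\{i,j\}$ precisely when $\sum_{k=1}^\infty (A_{ij}+A_{ji})=\infty$, which (since the terms are constant in $k$) holds if and only if $A_{ij}+A_{ji}>0$. Thus connectivity of the infinite flow graph is equivalent to the statement that the undirected graph with edges $\{i,j\}$ for which $A_{ij}>0$ or $A_{ji}>0$ is connected. The heart of the argument is relating this undirected connectivity, together with balancedness, to irreducibility, which is a statement about the directed graph of $A$.

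For the forward direction, suppose $A$ is irreducible. First I would recall the standard fact that irreducibility of $A$ is equivalent to strong connectivity of the directed graph $D(A)$ with an arc $i\to j$ whenever $A_{ij}>0$: the block-triangular characterization in the excerpt says exactly that no proper nonempty $S\subset[m]$ is ``closed'' under $A$ (i.e.\ there is no $S$ with $A_{ij}=0$ for all $i\in S,\,j\in\bar S$). To show balancedness, fix a nontrivial $S\subset[m]$; I must produce a uniform $\al>0$ with $A_{S\bar S}\ge \al A_{\bar S S}$. Since there are only finitely many nontrivial subsets $S$, it suffices to rule out the degenerate case $A_{S\bar S}=0$ while $A_{\bar S S}>0$, and otherwise take $\al$ to be the minimum over all $S$ of the (finite, positive) ratios. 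But $A_{S\bar S}=0$ means exactly that $S$ is closed under $A$, contradicting irreducibility; hence $A_{S\bar S}>0$ for every nontrivial $S$, and balancedness follows with a strictly positive coefficient. Connectivity of the infinite flow graph is then immediate, since strong connectivity of $D(A)$ forces the underlying undirected graph to be connected.

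For the reverse direction, assume $\{A\}$ is balanced with coefficient $\al>0$ and its infinite flow graph is connected; I must deduce irreducibility, i.e.\ show no nontrivial $S$ is closed. Suppose for contradiction that some nontrivial $S$ satisfies $A_{S\bar S}=0$. By balancedness, $0=A_{S\bar S}\ge \al A_{\bar S S}\ge 0$, forcing $A_{\bar S S}=0$ as well. Then $A_{ij}+A_{ji}=0$ for every $i\in S,\,j\in\bar S$, so the infinite flow graph has \emph{no} edge crossing the cut $(S,\bar S)$, contradicting connectivity. Hence $A_{S\bar S}>0$ for every nontrivial $S$, which is precisely the condition that $A$ has no closed proper subset, i.e.\ $A$ is irreducible.

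The main obstacle, and the step deserving the most care, is the forward direction's translation between the two graph notions: irreducibility is a \emph{directed} strong-connectivity property, whereas the infinite flow graph is \emph{undirected}. The clean bridge is the observation that balancedness makes the two notions coincide on the level of cuts — a cut is crossed in one direction if and only if it is crossed in the other (up to the factor $\al$) — so that one-sided reachability in $D(A)$ is equivalent to two-sided reachability. I expect the cleanest exposition to route everything through the cut condition ``$A_{S\bar S}>0$ for all nontrivial $S$,'' verifying that this single condition is simultaneously equivalent to irreducibility and, under balancedness, to connectivity of the infinite flow graph; this avoids repeatedly switching between vertex-reachability and edge-cut formulations.
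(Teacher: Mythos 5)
Your proposal is correct and follows essentially the same route as the paper: both directions are channeled through the cut characterization that $A$ is irreducible if and only if $A_{S\bar S}>0$ for every nontrivial $S\subset[m]$, with balancedness obtained from the finite minimum of the cut ratios and connectivity of the infinite flow graph read off from the fact that every cut is crossed. The only cosmetic difference is that you phrase irreducibility via strong connectivity of the directed graph and argue the converse by contradiction, whereas the paper works directly with the permutation/block-triangular definition and argues the converse directly; the mathematical content is identical.
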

 \begin{proof}
By the definition, a matrix $A$ is irreducible if there is no permutation
matrix $P$ such that
    \[P^TAP=\left[
    \begin{array}{cc}
    X&Y\\\mathbf{0}&Z
  \end{array}\right].\]
Since $A$ is a non-negative matrix, we have that $A$ is reducible if and only
if there exists a subset $S=\{1,\ldots,i\}$ for some $i\in [m-1]$, such that
  \begin{align}\nonumber
0=[P^TAP]_{\bar{S}S}=\sum_{i\in \bar{S},j\in S}e_i[P^TAP]e_j
=\sum_{i\in \bar{S},j\in S}A_{\sigma_i\sigma_j}
=\sum_{i\in \bar{R},j\in {R}}A_{ij},
  \end{align}
where $\sigma_i=\{j\in[m]\mid Pe_i=e_j\}$ (which is a singleton since $P$ is
a permutation matrix) and $R=\{\sigma_i\mid i\in S\}$.
Thus, $A$ is irreducible if and only if $A_{S\bar{S}}>0$ for all non-trivial
$S\subset[m]$. Therefore, by letting
\[\al=\min_{\substack{S\subset[m]\\S\not=\emptyset}}
\frac{A_{S\bar{S}}}{A_{\bar{S}S}},\]
and noting that $\al>0$, we conclude that $\{A\}$ is balanced with
a balancedness coefficient $\al$. Furthermore,
since $A_{S\bar S}+A_{\bar S S}\ge A_{S\bar{S}}>0$ for all nontrivial
$S\subset[m]$,
it follows that the infinite flow graph of $\{A\}$ is connected.

Now,  suppose that $\{A\}$ is balanced and
its infinite flow graph of $\{A\}$ is connected.
Then, $A_{S\bar{S}}>0$ or $A_{\bar{S}S}>0$ for all non-trivial $S\subset [m]$.
By the balancedness of the chain it follows that
$\min(A_{S\bar{S}},A_{\bar{S}S})>0$ for any non-trivial $S\subset[m]$,
implying that $A$ is irreducible.
 \end{proof}

Note that for an aperiodic $A$, we can always find some $h\geq 1$ such that
$A^h_{ii}\geq \gamma>0$ for all $i\in[m]$. Thus, based on
Theorem~\ref{thrm:mainresult}, we have the following extension of
Lemma~\ref{lemma:perron} for independent random chains.

\begin{theorem}\label{thrm:implication:extensioncor}
Let $\Wc$ be a balanced and strongly aperiodic
independent random chain with a connected infinite flow graph. Then,
for any $t_0\geq 0$, the product $W(k:t_0)$ converges to a rank one stochastic
matrix almost surely (as $k$ goes to infinity). Moreover, if $\Wc$ does not have the infinite flow property, the product $W(k:t_0)$ almost surely converges to a (random) matrix that has rank at most $\tau$ for any $t_0\geq 0$, where $\tau$ is the number of connected components of the infinite flow graph of $\{\EXP{W(k)}\}$.
   \end{theorem}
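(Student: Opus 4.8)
The plan is to derive Theorem~\ref{thrm:implication:extensioncor} as an essentially immediate corollary of Theorem~\ref{thrm:mainresult}, by translating the rank statement into a statement about the connected components of the infinite flow graph $\Ginf$ of $\Wc$. First I would invoke Theorem~\ref{thrm:mainresult}: since $\Wc$ is balanced and strongly aperiodic, for any $t_0\ge 0$ the product $W(k:t_0)$ converges almost surely to a random stochastic matrix $W(\infty:t_0)$, and for indices $i,j$ in the same connected component of $\Ginf$ we have $W_i(\infty:t_0)=W_j(\infty:t_0)$ almost surely. The entire argument then hinges on counting how many distinct rows $W(\infty:t_0)$ can have.

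For the first assertion, I would argue as follows. By hypothesis the infinite flow graph is connected, which by the Kolmogorov $0$-$1$ law and the remark following the definition of $\Ginf$ (namely that it almost surely equals the infinite flow graph of the expected chain $\{\EXP{W(k)}\}$) holds as a deterministic, almost-sure statement. Connectedness means all indices $i,j\in[m]$ lie in a single connected component, so Theorem~\ref{thrm:mainresult} forces $W_i(\infty:t_0)=W_j(\infty:t_0)$ almost surely for every pair $i,j$. Hence all rows of $W(\infty:t_0)$ coincide almost surely, so $W(\infty:t_0)=e\,v^T(t_0)$ for some random stochastic vector $v(t_0)$, which is precisely a rank-one stochastic matrix. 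This is exactly the consequence already spelled out in the discussion immediately following Theorem~\ref{thrm:mainresult}.

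For the second assertion, where $\Wc$ lacks the infinite flow property, the graph $\Ginf$ has $\tau\ge 2$ connected components $C_1,\dots,C_\tau$, which again by the $0$-$1$ law and Theorem~5 of \cite{TouriNedich:Approx} almost surely coincides with the component structure of $\{\EXP{W(k)}\}$. Within each $C_\ell$, Theorem~\ref{thrm:mainresult} gives that all rows $W_i(\infty:t_0)$ with $i\in C_\ell$ are equal almost surely. Therefore $W(\infty:t_0)$ has at most $\tau$ distinct rows, so its row space is spanned by at most $\tau$ vectors, giving $\mathrm{rank}\,W(\infty:t_0)\le\tau$ almost surely. I would phrase this via a finite union bound over the finitely many pairs $(i,j)$ to pass from the per-pair almost-sure equalities to a single almost-sure event on which all the within-component row equalities hold simultaneously.

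The main potential obstacle is a subtle measurability and null-set bookkeeping issue rather than a deep analytic one: Theorem~\ref{thrm:mainresult} delivers the row equalities $W_i(\infty:t_0)=W_j(\infty:t_0)$ as almost-sure statements for each fixed pair $(i,j)$ in the same component, and I must be careful that the exceptional null sets can be aggregated. Since $[m]$ is finite there are only finitely many such pairs, so their union is still a null set and the conclusion holds on the complement; this makes the obstacle routine. A secondary point requiring care is confirming that the component structure of $\Ginf$ is itself almost surely deterministic (so that $\tau$ is a well-defined constant rather than a random quantity), which is guaranteed by the Kolmogorov $0$-$1$ law as noted in Section~\ref{sec:setting}, so I would cite that fact explicitly to justify treating $\tau$ as a fixed integer.
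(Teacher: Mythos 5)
Your proposal is correct and follows essentially the same route as the paper: the paper's own proof of Theorem~\ref{thrm:implication:extensioncor} is simply ``The result follows immediately from Theorem~\ref{thrm:mainresult},'' with the rank-one and rank-at-most-$\tau$ consequences already spelled out in the discussion right after Theorem~\ref{thrm:mainresult} in Section~\ref{sec:setting}. Your additional care with the finite union bound over pairs $(i,j)$ and with the almost-sure determinism of $\Ginf$ (via the Kolmogorov $0$--$1$ law and Theorem~5 of \cite{TouriNedich:Approx}) merely makes explicit the bookkeeping the paper leaves implicit.
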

   \begin{proof}
     The result follows immediately from Theorem~\ref{thrm:mainresult}.
   \end{proof}

An immediate consequence of Theorem~\ref{thrm:implication:extensioncor} is a
generalization of Lemma~\ref{lemma:perron} to inhomogeneous chains.
\begin{corollary}
  Let $\Ac$ be a balanced and strongly aperiodic stochastic chain. Then, $A(\infty:t_0)=\lim_{k\to\infty}A(k:t_0)$ exists for all $t_0\geq 0$. Moreover, $A(\infty:t_0)$ is a
rank one matrix for all $t_0\geq 0$ if and only if the infinite flow graph of $\Ac$ is connected.
\end{corollary}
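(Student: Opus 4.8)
The plan is to obtain this corollary as the deterministic specialization of Theorem~\ref{thrm:mainresult} (and its reformulation Theorem~\ref{thrm:implication:extensioncor}), supplemented by the mutual-ergodicity lemma (Lemma~2 of \cite{TouriNedich:Approx}) to handle the converse. The key observation is that a deterministic chain $\Ac$ is a (degenerate) independent random chain in which $W(k)=A(k)$ almost surely, and for such a chain ``strongly aperiodic'' reduces—exactly as noted right after the definition—to $A_{ii}(k)\ge\tilde\gamma>0$ for all $i,k$. Thus $\Ac$ satisfies verbatim the hypotheses of Theorem~\ref{thrm:mainresult}.

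First I would settle existence. Applying Theorem~\ref{thrm:mainresult} to $\Ac$ viewed as a random chain, the product $W(k:t_0)$ converges almost surely; since the chain is deterministic, ``almost surely'' is simply ``surely,'' so $A(k:t_0)\to A(\infty:t_0)$ for every $t_0\ge0$. This disposes of the first assertion with no additional work. For the ``if'' direction of the equivalence (connected $\Ginf$ implies rank one for all $t_0$) I would invoke the second part of Theorem~\ref{thrm:mainresult}: for any $i,j$ in the same connected component of $\Ginf$ one has $A_i(\infty:t_0)=A_j(\infty:t_0)$. When $\Ginf$ is connected there is a single component, so all rows of $A(\infty:t_0)$ coincide, giving $A(\infty:t_0)=e\,v^T(t_0)$ for some stochastic vector $v(t_0)$; this is rank one and holds simultaneously for every $t_0$. (This is also precisely the deterministic instance of Theorem~\ref{thrm:implication:extensioncor}.)

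For the ``only if'' direction I would argue the contrapositive. Suppose $\Ginf$ is disconnected and pick indices $i,j$ lying in distinct connected components. By Lemma~2 of \cite{TouriNedich:Approx}—mutual ergodicity forces membership in a common component—the indices $i$ and $j$ are not mutually ergodic. Unwinding the definition of mutual ergodicity (the ``for any dynamics driven by $\Ac$'' formulation used in the paper), this means $\lim_{k\to\infty}\|A_i(k:t_0)-A_j(k:t_0)\|\ne0$ for at least one initial time $t_0$. Since $A(\infty:t_0)$ exists by the first part, the limit equals $\|A_i(\infty:t_0)-A_j(\infty:t_0)\|$, so $A_i(\infty:t_0)\ne A_j(\infty:t_0)$; hence $A(\infty:t_0)$ has two distinct rows and is not rank one for that $t_0$. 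Contraposing yields connectedness.

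The main obstacle is not any hard computation—Theorem~\ref{thrm:mainresult} and the mutual-ergodicity lemma carry all the substance—but rather the careful handling of the quantifier ``for all $t_0$.'' The delicate point is that ``not mutually ergodic'' must be read so as to furnish a \emph{single} witnessing $t_0$ at which two limiting rows differ, which is exactly what is needed to negate ``rank one for all $t_0$.'' I would therefore phrase mutual ergodicity explicitly in the paper's ``any initial point $(t_0,v)$'' form, so that its negation supplies the required $t_0$, and then let the already-established existence of $A(\infty:t_0)$ convert the nonvanishing row difference into a genuine rank obstruction.
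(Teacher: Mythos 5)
Your proof is correct and follows essentially the same route as the paper: the corollary is obtained by specializing Theorem~\ref{thrm:mainresult} (equivalently, Theorem~\ref{thrm:implication:extensioncor}) to a deterministic chain regarded as a degenerate independent random chain, for which almost-sure statements become sure statements. The one place where you go beyond what the paper makes explicit is the ``only if'' direction: the paper presents the corollary as an immediate consequence of Theorem~\ref{thrm:implication:extensioncor}, but the disconnected case of that theorem only asserts rank at most $\tau$, which by itself does not rule out rank one. Your contrapositive argument via Lemma~2 of \cite{TouriNedich:Approx} (indices in distinct components of $\Ginf$ are not mutually ergodic, and since the limits exist this produces a single witnessing $t_0$ with two distinct limit rows) supplies exactly the missing ingredient; it is also the same mechanism that underlies the two-way event equality stated in Theorem~\ref{thrm:infflowstability:infiniteflowstability}, so this is a clarification of the paper's reasoning rather than a departure from it. The only micro-step left implicit in your write-up is why two distinct rows preclude rank one: the limit matrix is stochastic, and proportional stochastic vectors are necessarily equal, so for stochastic matrices rank one is equivalent to all rows coinciding.
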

\section{Conclusion}\label{sec:conclusion}
In this paper we studied the limiting behavior of the products of random stochastic matrices from the dynamic system point of view.
% in general and product of random independent stochastic matrices, in particular.
We showed that any dynamics driven by such products admits time-varying Lyapunov functions.
Then, we defined a class $\Pstar$ of random chains which possess a well-behaved limits.
We have introduced balanced chains and discussed how many of the previously well-studied random chains are examples of such chains. We have established a general stability result for product of random stochastic matrices and showed that this result extends a classical convergence result for time-homogeneous
irreducible and aperiodic Markov chains.
\bibliographystyle{amsplain}
\bibliography{annals-probability}
\end{document}